\newtheorem{theorem}{Theorem}[section]
\newtheorem{lemma}[theorem]{Lemma}
\newtheorem{corollary}[theorem]{Corollary}
\theoremstyle{definition}
\newtheorem{definition}[theorem]{Definition}
\newtheorem{theorem-definition}[theorem]{Theorem-Definition}
\theoremstyle{remark}
\numberwithin{equation}{section}
\newcommand{\C}{\mathbb{C}}
\newcommand{\R}{\mathbb{R}}
\renewcommand{\P}{\mathbb{P}}
\begin{document}

\title{On the support of 
measures of large entropy for polynomial-like maps}
\begin{author}[S.~Bazarbaev]{Sardor Bazarbaev}
    \address{National University of Uzbekistan,  Tashkent, Uzbekistan}
\email{uzedu.bazarbaev@gmail.com}
\end{author}
\begin{author}[F.~Bianchi]{Fabrizio Bianchi}
\address{Dipartimento di Matematica, Università di Pisa, Largo Bruno Pontecorvo 5, 56127 Pisa, Italy}
  \email{fabrizio.bianchi$@$unipi.it}
\end{author}
\begin{author}[K.~Rakhimov]{Karim Rakhimov}

\address{V.I. Romanovskiy Institute of Mathematics of Uzbek Academy of Sciences,  Tashkent, Uzbekistan}
\email{karimjon1705$@$gmail.com }
\end{author}

\begin{abstract}
Let $f$ be a polynomial-like map 
 with dominant
topological degree $d_t\geq 2$
 and let $d_{k-1}<d_t$ be its dynamical degree of order $k-1$.
  We show that the support of 
every ergodic measure whose 
measure-theoretic
entropy is strictly larger than
 $\log \sqrt{d_{k-1} d_t}$
is supported on the Julia set, i.e., the support of the unique measure of maximal entropy $\mu$.
The proof is based on the
exponential speed of convergence
 of the measures
 $d_t^{-n}(f^n)^*\delta_a$ towards
 $\mu$, which is valid for a generic point $a$ and with 
 a controlled
 error bound depending on $a$.
 Our proof also gives a new proof of the same statement in the setting of endomorphisms of $\mathbb P^k(\mathbb C)$
 -- a result due to de Th\'elin and Dinh -- which does not rely on the existence of a Green current.
\end{abstract}
\maketitle

\section{Introduction}
The study of the dynamics of holomorphic
endomorphisms of complex projective spaces $\mathbb P^k:=\mathbb P^k(\mathbb C)$ 
is a central topic
in complex dynamics, see for instance \cite{DS10,Si99} for an overview of the subject. 
Let $f:\mathbb P^k\to \mathbb P^k$ be an endomorphism of algebraic degree $d\ge 2$. There exists a canonical 
positive closed $f^*$-invariant $(1,1)$-current $T$, called the \emph{Green current of $f$},
with the property that the sequence
$d^{-n}(f^n)^*\omega_0$ converges to $T$ for every smooth 
positive closed $(1,1)$-form $\omega_0$ of mass $1$.
The current $T$ has strong geometric properties, in particular, it has H\"older continuous potentials.
As a consequence, the measure $\mu:=T^{\wedge k}$ is well-defined, and it is the unique measure of maximal entropy 
 $k\log d$
 of $f$
\cite{BD01,G03}.
Its support
is called the \emph{Julia set} of $f$. 
By a result of de Th{\'e}lin and Dinh
 \cite{dTh06,Dinh07},
 every
 ergodic measure
 whose
 measure-theoretic entropy is strictly larger than $(k-1)\log d$
 is also supported on the Julia set of $f$. Large classes of examples of such measures are constructed and studied in \cite{BD20,BD22,D12,SUZ,UZ13}.

\medskip

The proof given 
in 
\cite{dTh06,Dinh07}
of the above property crucially relies on the existence 
of the Green current. In particular, it follows from a delicate induction
which makes use of the successive self-intersections $T^{\wedge j}$ of the Green current $T$. It is then unclear how to generalize this result
to more general 
non-algebraic settings, where a 
dynamical Green current does not exist. In this paper, we address
this problem in the case of polynomial-like maps  with dominant topological degree. Our proof will in particular also give a new proof of the result by de Th{\'e}lin and Dinh, which makes no use of the Green current.

\medskip

Recall that polynomial-like maps are proper holomorphic maps $f:U\to V$, where $U \Subset V$ are open subsets of $\C^k$ and $V$ is convex. 
By definition, every polynomial-like map defines a ramified covering $U\to V$
and the \emph{topological degree} $d_t$ of $f$ is well-defined. 
For
every
$0\le p\le k$, one can define 
the
\emph{dynamical degrees}\footnote{Sometimes, see for instance \cite{DS10},
these degrees are denoted by $d^*_p$
to distinguish them from a
different type of dynamical degree
that can also be considered. Since here we will only use one type of dynamical degree, we will use the simpler notation $d_p$.}
    $$    d_p=d_p(f):=\limsup_{n\to\infty}\sup_S\|(f^n)_*(S)\|_U^{1/n},$$
       where
      the supremum is taken over all positive closed $(k-p,k-p)$-currents on $U$ 
       whose mass
       is less than or equal to $1$
       see \cite{DS03,DS10} and Definition \ref{def:degrees} below. 
       Note that we always have $d_0=1$ and $d_k=d_t$. By 
       \cite{BDR23}, the sequence
       $\{d_p\}_{0\leq p\leq k}$ is non-decreasing. Hence, in particular, we have 
      $\max_{0\leq p \leq k-1} d_p = d_{k-1}$. 
       We say that $f$ has  \textit{dominant topological degree}\footnote{In some references,
       maps with $d_{k-1}<d_t$
       are said to have \emph{large}  topological degree, and the name \emph{dominant} is reserved to maps for which
       $\max_{0\leq p \leq k-1} d_p< d_t$. We use here the name \emph{dominant} as, by \cite{BDR23}, these notions are equivalent.}
       if $d_{k-1}< d_t$. Observe that in this case we always have $d_t\ge 2$.

\medskip

Polynomial-like maps 
 with dominant
topological degree
enjoy many of the dynamical properties of endomorphisms
(however, their study is usually technically more involved, because of the lack of a naturally defined Green function).
In particular,
for every such $f$
there 
exists a unique measure $\mu$
of maximal entropy $\log d_t$ 
and
a proper analytic set $\mathcal{E}\subset V$ such that
\begin{equation}
\label{e:intro-conv}
d^{-n}_t(f^n)^*\delta_a
\to \mu
\quad \mbox{
for all }
\quad
a\in  V\setminus \bigcup_{j=0}^\infty f^j(\mathcal{E}),\end{equation}
see
\cite{DS03,DS10}.
Note that, unlike the case of endomorphisms
of $\mathbb P^k$,
here
the set $\mathcal{E}$ 
may not be
$f$-invariant
and hence  $\cup_{j=0}^\infty f^j(\mathcal{E})$ is not, a priori,
an analytic set.

\medskip

The following is our main result.

\begin{theorem}\label{t:intro-nomasspluripolar}
  Let $f:U\to V$ 
be a polynomial-like map 
 with dominant
topological degree.
Every 
   ergodic measure
   $\nu$
   whose measure-theoretic entropy
  satisfies 
 $h_\nu(f)>\log \sqrt{d_{k-1} d_t}$
  is supported on the Julia set $J$.
 \end{theorem}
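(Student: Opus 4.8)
The plan is to argue by contradiction. Suppose $\nu$ is ergodic, $h_\nu(f)>\log\sqrt{d_{k-1}d_t}$, and $\mathrm{supp}\,\nu\not\subset J$. We may assume $h_\nu(f)<\log d_t$, since otherwise $\nu=\mu$ by uniqueness of the measure of maximal entropy and there is nothing to prove. As $J=\mathrm{supp}\,\mu$ is closed and $\nu(V\setminus J)>0$, fix a ball $B$ with $\overline B\subset V\setminus J$ and $\nu(B)>0$, together with a smooth $\chi$ satisfying $\mathbf 1_B\le\chi\le 1$ and $\mathrm{supp}\,\chi\subset V\setminus J$, so that $\langle\mu,\chi\rangle=0$. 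The contradiction will come from comparing an upper and a lower estimate for $N_n(a):=\#\big((f^n)^{-1}(a)\cap B\big)$, the number of preimages of a well-chosen point $a$ that land in $B$.

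For the upper estimate I would use the quantitative form of \eqref{e:intro-conv} proved earlier in the paper: off a set $\mathcal E'\subset V$ which is small (contained in a countable union of proper analytic subsets, possibly enlarged by a pluripolar set), one has $\big|\langle d_t^{-n}(f^n)^*\delta_a,\chi\rangle-\langle\mu,\chi\rangle\big|\le C(a)\,\theta^n$ for all $n$, where $\theta$ is any fixed number $>\sqrt{d_{k-1}/d_t}$ and $C(a)=C(a,\theta,\chi)$. Since $\langle\mu,\chi\rangle=0$ and $\chi\ge\mathbf 1_B$, this yields $N_n(a)\le\langle(f^n)^*\delta_a,\chi\rangle\le C(a)\,\beta^n$ for every $a\notin\mathcal E'$, with $\beta:=\theta d_t$; and $\theta$ can be chosen so that $\log\beta<h_\nu(f)$, precisely because $h_\nu(f)>\log\sqrt{d_{k-1}d_t}$.

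For the lower estimate I would pass to the natural extension $\hat f:\hat X\to\hat X$ of $(f,\nu)$, the shift on backward orbits $\hat x=(x_0,x_{-1},\dots)$, with ergodic invariant lift $\hat\nu$; then $h_{\hat\nu}(\hat f)=h_\nu(f)$ and $(\pi_0)_*\hat\nu=\nu$ for $\pi_0(\hat x)=x_0$. Disintegrate $\hat\nu=\int\hat\nu_{x_0}\,d\nu(x_0)$ and let $\nu^{(n)}_{x_0}$ be the law of $x_{-n}$ under $\hat\nu_{x_0}$, a probability measure on the finite set $(f^n)^{-1}(x_0)\cap\mathrm{supp}\,\nu$ (a preimage of $x_0$ determining the connecting orbit segment, since $f^n$ is a branched covering). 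I would then use two facts holding for $\nu$-a.e.\ $x_0$. First, backward Birkhoff: since $\hat f$ is invertible and $\{x_{-n}\in B\}=\hat f^{\,n}(\pi_0^{-1}B)$, for $\hat\nu_{x_0}$-a.e.\ backward orbit the sequence $(x_{-n})$ visits $B$ with frequency $\nu(B)$, so integrating gives $\frac1N\sum_{n<N}\nu^{(n)}_{x_0}(B)\to\nu(B)$, hence $\nu^{(n)}_{x_0}(B)\ge\nu(B)/2$ along infinitely many $n$. Second, the Shannon--McMillan--Breiman theorem in Rokhlin's entropy theory for endomorphisms: for $\hat\nu_{x_0}$-a.e.\ backward orbit, $-\tfrac1n\log\nu^{(n)}_{x_0}(\{x_{-n}\})\to h_\nu(f)$, so fixing a small $\delta>0$ the set $A_n:=\{y:\nu^{(n)}_{x_0}(\{y\})\le e^{-n(h_\nu(f)-\delta)}\}$ has $\nu^{(n)}_{x_0}(A_n)\to 1$. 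Combining, $\nu^{(n)}_{x_0}(A_n\cap B)\ge\nu(B)/4$ along infinitely many $n$, and therefore along those $n$,
\[
N_n(x_0)\ \ge\ \#(A_n\cap B)\ \ge\ \frac{\nu^{(n)}_{x_0}(A_n\cap B)}{\max_{y\in A_n}\nu^{(n)}_{x_0}(\{y\})}\ \ge\ \frac{\nu(B)}{4}\,e^{n(h_\nu(f)-\delta)}.
\]

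To match the two estimates, the base point $a$ must be taken in $V\setminus\mathcal E'$. This is possible because $h_\nu(f)>\log\sqrt{d_{k-1}d_t}\ge\log d_{k-1}$ forces $\nu$ not to be carried by $\mathcal E'$: an invariant measure carried by a countable union of analytic subsets of dimension $\le k-1$, or by a pluripolar set, has entropy at most $\log d_{k-1}$. Choosing such an $a$ and comparing the two estimates along the infinitely many good $n$ gives $e^{n(h_\nu(f)-\delta-\log\beta)}\le 4C(a)/\nu(B)$, which is absurd once $\delta$ is small enough that $h_\nu(f)-\delta>\log\beta$. The hard part will be the lower estimate: extracting a genuinely exponential count $e^{n(h_\nu(f)-\delta)}$ of preimages that land \emph{in the small ball} $B$ — not merely in $\mathrm{supp}\,\nu$ — from the entropy hypothesis alone, which is what forces one to pass to the natural extension and to use the Shannon--McMillan--Breiman theorem to bound the atoms of $\nu^{(n)}_{x_0}$ while simultaneously exploiting the recurrence of backward orbits to $B$. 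A secondary point requiring care is matching the base point of the lower estimate with the exceptional set of the quantitative equidistribution, which is where the bound $h_\nu(f)>\log d_{k-1}$ is used.
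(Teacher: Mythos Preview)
Your approach is genuinely different from the paper's, and the core counting idea---bounding from above the number of $n$-th preimages of $a$ landing in a ball $B$ away from $J$ via the quantitative equidistribution, and from below via Rokhlin's formula on the natural extension---is sound. In fact the part you flag as ``hard'' is not the problem: for a finite-to-one map one has $h_\nu(f)=\int\log J_\nu\,d\nu$ with $J_\nu\ge 1$ and $\log J_\nu\in L^1$, each $y\in (f^n)^{-1}(x_0)$ determines the whole segment $(y,f(y),\dots,x_0)$, so $\nu^{(n)}_{x_0}(\{y\})=\prod_{j=0}^{n-1}J_\nu(f^j(y))^{-1}$, and Birkhoff for $\hat f^{-1}$ gives exactly the asymptotics you need.

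The real gap is the step you call ``secondary''. The quantitative equidistribution in Theorem~\ref{t:speeda1} gives the bound $N_n(a)\le A\|\chi\|_{\mathcal C^2}\,\lambda^n\,|u_\lambda(a)|$ for \emph{every} $a$, but this is only useful where $u_\lambda(a)>-\infty$; the exceptional set $\{u_\lambda=-\infty\}$ is pluripolar, not a countable union of analytic sets, and it is not invariant in any obvious way. Your claim that an ergodic invariant measure carried by a pluripolar set must have entropy at most $\log d_{k-1}$ is not a standard fact, and in the polynomial-like setting there is no Green current to fall back on to prove it. This is essentially of the same strength as---and in the $\mathbb P^k$ case is deduced from---the very statement you are trying to prove, so invoking it here is circular. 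Without it you cannot guarantee that a $\nu$-typical base point $x_0$ satisfies $u_\lambda(x_0)>-\infty$, and the comparison of the two estimates collapses.

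The paper's proof sidesteps this issue entirely. Rather than applying the equidistribution to Dirac masses $\delta_a$ (which forces one to control the pole set of $u_\lambda$), it bounds the \emph{topological} entropy $h_t(f,F)$ on any closed $F$ disjoint from $J$ via Gromov's volume growth $\mathrm{lov}(f,W)$, and then applies Corollary~\ref{t:speed} with $\omega_0$ a \emph{smooth} probability measure (a normalized $\Omega'_{n_1,\dots,n_k}$). The resulting error is $\int|u_\lambda|\,\omega_0$, which is controlled uniformly by Lemma~\ref{l:limitmeasure} using Skoda's $L^1$-integrability of psh functions against smooth volume forms; no hypothesis on where $\nu$ sits relative to $\{u_\lambda=-\infty\}$ is ever needed. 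The conclusion $\mathrm{supp}\,\nu\subset J$ then comes for free from the relative variational principle.
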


The
proof of Theorem \ref{t:intro-nomasspluripolar}
consists of a 
quantified version of the classical estimate
by Gromov 
\cite{G03}
for the topological entropy in terms of
the volume
growth of suitable analytic sets in the space of orbits.
It is given in Section \ref{s:4} and
 exploits in a crucial way 
 an
explicit 
exponential rate of
 the
convergence 
\eqref{e:intro-conv},
which we 
 discuss
in Section \ref{s:1}.
In the same section, we
give a more precise 
bound
$\log \beta(f)$
for the entropy in Theorem \ref{t:intro-nomasspluripolar}, see also Theorem \ref{t:logp-nomasspluripolar}.

\medskip

 In the case of 
endomorphisms of $\mathbb P^k$,
we have $\beta (f)= d^{k-1}$, hence the bound $\log \sqrt{d_{k-1} d_t}$ can be improved to $\log d_{k-1} = (k-1)\log d$.
In particular, Theorem \ref{t:intro-nomasspluripolar} gives an alternative proof of the result by de Th{\'e}lin and Dinh
mentioned above \cite{dTh06,Dinh07}.

\begin{corollary}
Let $f$ be an endomorphism of $\mathbb P^k$ of algebraic degree $d\geq 2$. Every ergodic measure whose measure-theoretic entropy is strictly larger than $(k-1)\log d$ is supported on the Julia set.
\end{corollary}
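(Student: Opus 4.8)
The plan is to deduce the Corollary from the sharper form of Theorem~\ref{t:intro-nomasspluripolar} announced above, in which the entropy threshold is $\log\beta(f)$, with $\beta(f)$ the quantity governing the exponential rate of the convergence \eqref{e:intro-conv} (Theorem~\ref{t:logp-nomasspluripolar}). In general one only has $\beta(f)\le\sqrt{d_{k-1}d_t}$, which for an endomorphism $f$ of $\P^k$ of algebraic degree $d$, where $d_t=d^k$ and $d_{k-1}=d^{k-1}$, would give the threshold $\log d^{k-1/2}$. So the whole content of the Corollary is the improvement $\beta(f)=d^{k-1}$ in the projective setting: once this is established, $\log\beta(f)=(k-1)\log d$ and there is nothing more to do.

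Establishing $\beta(f)=d^{k-1}$ is the step where the Green current $T$ re-enters — but only to estimate a convergence rate, not as a structural ingredient of the entropy argument. Since $T$ has Hölder continuous potentials, $d^{-n}(f^n)^*\omega_0\to T$ is exponentially fast in potential, with rate $O(d^{-n})$, and taking the $k$-th self-intersection gives $d^{-kn}(f^n)^*\omega_0^{\wedge k}\to\mu$ at the same rate. To replace $\omega_0^{\wedge k}$ by a point mass, write $\delta_a-\omega_0^{\wedge k}=dd^cW$ for a quasi-potential $W$, a $(k-1,k-1)$-current of controlled mass (available for $a$ outside a pluripolar set); by the standard estimates for the pull-back action on $(k-1,k-1)$-currents, $\|(f^n)^*W\|=O(d^{(k-1)n})$, so $d^{-kn}(f^n)^*(\delta_a-\omega_0^{\wedge k})=dd^c\bigl(d^{-kn}(f^n)^*W\bigr)\to0$ at rate $O(d^{-n})$ against $\mathcal C^2$ observables. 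Combining the two bounds gives $\bigl\|d^{-kn}(f^n)^*\delta_a-\mu\bigr\|=O(d^{-n})$, i.e.\ $\beta(f)=d^{k-1}$.

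It then remains to check that the proof of Theorem~\ref{t:intro-nomasspluripolar} carries over to $\P^k$, which should cause no real trouble. The quantified Gromov-type estimate of Section~\ref{s:4} is modeled on Gromov's original argument, whose natural setting is precisely a compact Kähler manifold such as $\P^k$, and Section~\ref{s:1} is simply to be run with $\beta(f)=d^{k-1}$ in place of $\sqrt{d_{k-1}d_t}$; moreover on $\P^k$ the exceptional set $\mathcal E$ is totally invariant, so $\bigcup_j f^j(\mathcal E)=\mathcal E$ is a genuine proper algebraic subset and this part of the argument only simplifies. Feeding $\beta(f)=d^{k-1}$ through the volume-growth comparison then shows that an ergodic $\nu$ with $h_\nu(f)>(k-1)\log d$ cannot carry mass outside $J$, which is the claim.

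The main obstacle I anticipate lies in the choice of $a$ in the rate estimate above: one needs $a$ outside $\mathcal E$ and chosen so that the quasi-potential $W$ of $\delta_a-\omega_0^{\wedge k}$ has pull-backs $(f^n)^*W$ genuinely of mass $O(d^{(k-1)n})$ with a finite constant $C(a)$, and one must then know that the set of such good points is large enough — the complement of a pluripolar set — for the orbit-space volume estimate of Section~\ref{s:4} to apply. Showing that this control is quantitatively uniform enough to survive the Gromov argument, rather than merely holding pointwise, is the part I expect to require the most care; transporting the remaining polynomial-like estimates to $\P^k$ should be routine.
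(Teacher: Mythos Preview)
Your high-level plan is right: deduce the Corollary from Theorem~\ref{t:logp-nomasspluripolar} once one knows $\beta(f)=d^{k-1}$ for an endomorphism of $\P^k$. But your route to $\beta(f)=d^{k-1}$ is far more elaborate than needed, and it reintroduces exactly the object the paper is designed to avoid.

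Recall how $\beta$ is defined: $\beta=\max\{d_{k-1},\gamma_0 d_t\}$, where $\gamma_0$ is the optimal rate in \eqref{eq:ph}, an estimate tested only against \emph{pluriharmonic} functions $\psi$. On a compact complex manifold such as $\P^k$, every pluriharmonic function is constant; hence \eqref{eq:ph} is vacuous and one may take $\gamma_0=0$. This immediately gives $\beta=d_{k-1}=d^{k-1}$, with no Green current, no quasi-potentials, and no pointwise control of constants $C(a)$ to worry about. The ``main obstacle'' you anticipate simply does not arise.

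Your detour through the Green current $T$, H\"older potentials, and quasi-potentials of $\delta_a-\omega_0^{\wedge k}$ is essentially a sketch of the Dinh--Sibony equidistribution-speed result \cite{DS10a}; it can be made to work, but it is a substantially harder theorem than what is needed here, and invoking it would undercut the paper's stated goal of giving a proof of the de~Th\'elin--Dinh result that does not rely on the Green current. (There is also a genuine technical cost in your version: the uniformity in $a$ that you flag is delicate, whereas in the paper's framework the analogous control is packaged into the psh function $u_\lambda$ of Theorem~\ref{t:speeda1}, whose construction uses only \eqref{eq:ph} and the definition of $d_{k-1}$.)

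In short: keep your first and last paragraphs, and replace the middle two by the one-line observation that pluriharmonic functions on $\P^k$ are constant, so $\gamma_0=0$ and $\beta=d_{k-1}=d^{k-1}$. That is precisely the paper's argument.
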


 We refer to Section \ref{s:5} for
 further applications and corollaries of Theorem \ref{t:intro-nomasspluripolar}.

\subsection*{Acknowledgments}
The authors would like to thank Tien-Cuong Dinh for useful discussions about
Section \ref{ss:speed-easy}.
This project has received funding from
 the
 Programme
 Investissement d'Avenir
(ANR QuaSiDy /ANR-21-CE40-0016,
ANR PADAWAN /ANR-21-CE40-0012-01),
from 
the   government of Uzbekistan
 through the grant
 IL-5421101746,
 from the MIUR Excellence Department Project awarded to the Department of Mathematics of the University of Pisa, CUP I57G22000700001,
 and from the PRIN 2022 project MUR 2022AP8HZ9{\_}001.
The second
author is affiliated to the GNSAGA group of INdAM.

\section{Preliminaries}\label{s:1}

\subsection{Polynomial-like maps}
A \emph{polynomial-like map} is a proper holomorphic map $f:U\to V$, where $U \Subset V$ are open subsets of $\C^k$ and $V$ is convex. Homogeneous lifts to $\mathbb{C}^{k+1}$ of endomorphisms of $\mathbb{P}^k$ give examples of
polynomial-like maps.
In dimension $k=1$, any polynomial-like map is conjugate to an actual polynomial on the Julia set \cite{DH85}. However, in higher dimensions, the class of polynomial-like maps is significantly larger than that of regular polynomial endomorphisms of $\mathbb{C}^k$ (i.e., those extending holomorphically to $\mathbb{P}^k$), see for instance \cite[Example 2.25]{DS10}.

\medskip

Every polynomial-like map $f$ 
gives a ramified covering from $U$ to $V$ and the \emph{topological degree} $d_t$ of $f$ is well-defined. We will always assume that 
we have
$d_t\ge 2$. The (compact) set $K:= \bigcap_{n=1}^{\infty} f^{-n}(U)$ is 
 called
the \emph{filled-in Julia set} of $f$.
It consists of 
the points whose orbit is well-defined. The system $(K,f)$ is a true dynamical system.
 \begin{definition}\label{def:degrees}
    Let $f:U\to V$ be a polynomial-like map. For every
    $0\le p\le k$ define
    \begin{equation}\label{stardegree}
    d_p=d_p(f):=\limsup_{n\to\infty}\sup_S\|(f^n)_*(S)\|_W^{1/n},
    \end{equation}
       where $W\Subset V$ 
       is an open
       neighbourhood of $K$ and the supremum in \eqref{stardegree} is taken over all positive closed $(k-p,k-p)$-currents whose mass is less than or equal to 1 on a fixed neighbourhood $W'\Subset V$ of $K$. We say
       that
       $d_p$ is 
       the
      \emph{dynamical degree of order $p$} of $f$.
  \end{definition}

   Recall that the mass of a positive $(p,p)$-current $S$ on the open set 
   $W$ is given by $||S||_W:=\int_WS \wedge \omega^{k-p}$, where $\omega$ is the standard K\"ahler form of $\mathbb C^k$.
 The definition above 
  is independent of $W,W'$
  \cite{DS03,DS10}.
  Moreover,
  we have
  $d_0=1$ and $d_k=d_t$.   In the case of endomorphisms of $\P^k$ of algebraic degree $d$, the above definitions reduce to $d_{p} = d^p$. 
    We say that $f$ has \textit{ dominant topological degree} if $d_{k-1}< d_t$.
By 
\cite[Theorem 1.3]{BDR23}, the
sequence
  $\{d_p\}_{0\leq p \leq k}$ is non-decreasing, hence we have 
    $\max_{0\leq p \leq k-1} d_p = d_{k-1} <d_t$  (and therefore also $d_t\ge 2$)
    for every polynomial-like map  with dominant topological degree. 

\medskip
Polynomial-like maps 
 with dominant
topological degree enjoy many of the dynamical properties of endomorphisms.
For instance, they admit a unique measure of maximal entropy $\log d_t$
\cite{DS03,DS10}. 
We will denote this measure by $\mu$ and define the \textit{Julia set} $J$ as the support of $\mu$. Observe that $J$ is a subset of the boundary of $K$.

\subsection{Speed of convergence}
\label{ss:speed-easy}
Let us fix a polynomial-like map $f:U\to V$
with topological degree $d_t\ge 2$. By 
\cite[Proposition 3.2.5]{DS03}
see also
\cite[Proposition 2.15]{DS10},
there exists a constant $0<\gamma<1$ such that
\begin{equation}\label{eq:ph}
|d_t^{-n} (f^n)_* \psi
-
\langle
\mu, \psi
\rangle
|
\lesssim \gamma^n
\end{equation}
for every function $\psi$ in a given compact family of pluriharmonic functions on $V$, where the implicit constant depends only on the family. 
 We will denote by $\gamma_0 = \gamma_0 (f)$ the infimum of the constants $\gamma$ for which \eqref{eq:ph} holds.
Assuming now that $f$ 
has  dominant topological degree, we 
set
\begin{equation}\label{eq:beta}
\beta = \beta(f):=\max\left\{{d_{k-1}},\gamma_0 d_t\right\}.
\end{equation}

In the case of endomorphisms of $\mathbb P^k$, 
every pluriharmonic function is constant. Hence, \eqref{eq:ph} is trivial and we can take $\beta= d_{k-1}= d^{k-1}$, where $d$ is the algebraic degree of the endomorphism. More generally, in a compact setting, one can take $\beta = d_{k-1}$. In our setting, 
we have the following weaker bound for $\beta$.
 \begin{lemma}\label{l:bound-beta}
  Let $f:U\to V$ be a polynomial-like map  with dominant topological degree $d_t$.
Then we have $\beta (f) \leq \sqrt{ d_{k-1} d_t}$.
\end{lemma}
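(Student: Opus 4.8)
Since $d_{k-1}<d_t$ we have $d_{k-1}=\sqrt{d_{k-1}\cdot d_{k-1}}\le\sqrt{d_{k-1}d_t}$, so by the definition \eqref{eq:beta} of $\beta$ it suffices to prove $\gamma_0 d_t\le\sqrt{d_{k-1}d_t}$, i.e.\ that the estimate \eqref{eq:ph} holds for every $\gamma>\sqrt{d_{k-1}/d_t}$. The plan is to fix an arbitrary compact family $\mathcal F$ of pluriharmonic functions on $V$ and, after subtracting the uniformly bounded constants $\langle\mu,\psi\rangle$, to assume $\langle\mu,\psi\rangle=0$ for all $\psi\in\mathcal F$. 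Writing $\Lambda:=d_t^{-1}f_*$, so that $d_t^{-n}(f^n)_*\psi=\Lambda^n\psi$, I would first record that $\Lambda$ preserves pluriharmonicity (since $f_*$ commutes with $\partial$ and $\bar\partial$ and preserves bidegrees, a locally integrable function with vanishing $dd^c$ being pluriharmonic) and that $\langle\mu,\Lambda^n\psi\rangle=\langle\mu,\psi\rangle=0$ (from $f^*\mu=d_t\mu$). The quantity to control will then be the positive closed $(1,1)$-current $S_\psi:=i\,\partial\psi\wedge\overline{\partial\psi}=i\,\zeta\wedge\bar\zeta$, where $\zeta:=\partial\psi$ is a holomorphic $1$-form on $V$; by compactness of $\mathcal F$ the mass of $S_\psi$ on a fixed neighbourhood of $K$ is bounded uniformly over $\mathcal F$.

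The first, and main, step is a fibrewise Cauchy--Schwarz estimate. Since $\partial(\Lambda^n\psi)=d_t^{-n}(f^n)_*\zeta$, evaluating at a regular value $y$ of $f^n$, with fibre $\{x_1,\dots,x_{d_t^n}\}$, and writing $(f^n)_*\zeta(y)=\sum_j a_j$ for the $d_t^n$ transported $(1,0)$-covectors $a_j$, the inequality $\big|\sum_j a_j\big|^2\le d_t^n\sum_j|a_j|^2$ translates into
\[
i\,\big((f^n)_*\zeta\big)\wedge\overline{(f^n)_*\zeta}\ \le\ d_t^{n}\,(f^n)_*\big(i\,\zeta\wedge\bar\zeta\big)\ =\ d_t^{n}\,(f^n)_*S_\psi
\]
as positive closed $(1,1)$-currents. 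Hence, for every $\varepsilon>0$, using Definition \ref{def:degrees} and the uniform bound on $\|S_\psi\|$,
\[
\big\|\,i\,\partial(\Lambda^n\psi)\wedge\overline{\partial(\Lambda^n\psi)}\,\big\|\ \le\ d_t^{-2n}\,\big\|\,i\,((f^n)_*\zeta)\wedge\overline{(f^n)_*\zeta}\,\big\|\ \le\ d_t^{-n}\,\big\|(f^n)_*S_\psi\big\|\ \lesssim_{\varepsilon}\ \Big(\frac{d_{k-1}+\varepsilon}{d_t}\Big)^{n}.
\]
The second step converts this mass bound into a sup-norm bound. Setting $v:=\Lambda^n\psi$, a pluriharmonic function with $\langle\mu,v\rangle=0$, I would use that the coefficients of the holomorphic $1$-form $\partial v$ satisfy the sub-mean-value inequality to get, on a ball $B$ with $\tfrac32 B$ relatively compact in $V$, the interior estimate $\|\nabla v\|_{L^\infty(B)}\lesssim\big\|\,i\,\partial v\wedge\overline{\partial v}\,\big\|_{\frac32 B}^{1/2}$. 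Covering $K$ by finitely many such balls, taking a connected neighbourhood $W\supset K$ contained in their union, and noting that $\Supp\mu\subset K$ forces $v$ to attain the value $0=\langle\mu,v\rangle$ on $K$, one obtains
\[
\|\Lambda^n\psi\|_{\infty,W}\ \le\ \mathrm{osc}_{W}(\Lambda^n\psi)\ \lesssim\ \big\|\,i\,\partial(\Lambda^n\psi)\wedge\overline{\partial(\Lambda^n\psi)}\,\big\|^{1/2}\ \lesssim_{\varepsilon}\ \Big(\frac{d_{k-1}+\varepsilon}{d_t}\Big)^{n/2},
\]
with implicit constants depending only on $K$, the chosen neighbourhoods, and $\mathcal F$. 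This is exactly \eqref{eq:ph} with $\gamma=\big((d_{k-1}+\varepsilon)/d_t\big)^{1/2}$; letting $\varepsilon\to0$ gives $\gamma_0\le\sqrt{d_{k-1}/d_t}$, hence $\gamma_0 d_t\le\sqrt{d_{k-1}d_t}$, and together with $d_{k-1}\le\sqrt{d_{k-1}d_t}$ this yields $\beta(f)\le\sqrt{d_{k-1}d_t}$.

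I expect the main obstacle to be the first step: one has to make sense of $(f^n)_*\zeta$ and of the wedge products along the critical image of $f^n$ (a pluripolar set), verify that the Cauchy--Schwarz inequality between positive $(1,1)$-currents is unaffected there, and keep careful track of the nested neighbourhoods of $K$ so that the mass bounds coming from Definition \ref{def:degrees} apply with constants uniform over $\mathcal F$. The second step should be routine complex analysis (interior estimates for holomorphic functions plus a covering/compactness argument together with the observation that $\langle\mu,\cdot\rangle$ pins down the additive constant), and the loss of a square root compared with the compact case is precisely the combined effect of the factor $d_t^{n}$ in the fibrewise Cauchy--Schwarz estimate and the exponent $1/2$ in passing from current masses to sup-norms of pluriharmonic functions.
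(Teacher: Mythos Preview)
Your proposal is correct and follows essentially the same route as the paper: reduce to $\gamma_0\le\sqrt{d_{k-1}/d_t}$, apply a Cauchy--Schwarz inequality to get $i\,\partial(\Lambda^n\psi)\wedge\overline{\partial(\Lambda^n\psi)}\le d_t^{-n}(f^n)_*(i\,\partial\psi\wedge\overline{\partial\psi})$, and then bound the mass of the pushforward of the positive closed $(1,1)$-current $i\,\partial\psi\wedge\overline{\partial\psi}$ using the definition of $d_{k-1}$. Your second step (interior estimates for the holomorphic $1$-form $\partial v$ plus pinning the constant via $\langle\mu,v\rangle=0$) is exactly what the paper encodes in the single remark that $\psi\mapsto\sqrt{\|i\,\partial\psi\wedge\overline{\partial\psi}\|_U}$ is a norm on pluriharmonic functions with $\langle\mu,\psi\rangle=0$; you have simply made that passage explicit.
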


\begin{proof}
   It is enough to show that $\gamma_0$ satisfies $\gamma_0 \leq \sqrt{d_{k-1}/d_t}$. Hence,
it suffices to show that
\eqref{eq:ph} holds for every $\gamma>\sqrt{d_{k-1}/d_t}$.

\medskip

   Observe that the map
$\psi \mapsto\sqrt{ \| i \partial \psi \wedge \bar \partial \psi \|_U}$
defines a norm on the space of pluriharmonic functions on $V$ with $\langle\mu, \psi\rangle=0$.
Moreover,
by the Cauchy-Schwarz 
 inequality, for any such function 
 we have
\[
0 \leq 
i \partial (d^{-n}_t (f^n)_* \psi ) \wedge \bar \partial (d^{-n}_t (f^n)_* \psi ) 
\leq 
d^{-n}_t (f^n)_* (i \partial \psi \wedge \bar \partial \psi).
\]
 As $i\partial \psi \wedge \bar \partial \psi$ is a positive closed $(1,1)$-current,
 the mass of the last term in the above expression satisfies
\[
\|
d^{-n}_t f^n_* (i \partial \psi \wedge \bar \partial \psi)\|_{U}
\lesssim d^{-n}_t (d_*)^n
\|
i \partial \psi \wedge \bar \partial \psi
\|_{U}
\quad \mbox{ as } n\to \infty
\]
for every $d_* > d_{k-1}$. The assertion follows.
\end{proof}

The following result, whose proof in a compact setting is essentially given in 
\cite[Lemme 4.2]{DS06}, gives the estimates for the rate in the convergence \eqref{e:intro-conv} that we will need.
We give the details 
of the proof for the reader's convenience.
A more precise version
in the setting of endomorphisms of $\mathbb P^k$ is given in \cite{DS10a}.

\begin{theorem}\label{t:speeda1}
    Let $f:U\to V$ be a polynomial-like map  with dominant topological degree $d_t$ and $\mu$
its equilibrium measure. 
 Let $\beta$ be as in \eqref{eq:beta} and 
take $\lambda$ such that
$\beta<\lambda<d_t$.
Then there exists a psh function $u_\lambda$
 with $u_\lambda <-1$
such that for every
$a\in U$,
$\psi\in \mathcal{C}^2(U)$, 
and
$n\in \mathbb N$
we have
\begin{equation}\label{eq:ulam}
    |\langle d_t^{-n}(f^n)^*\delta_a-\mu,\psi \rangle|\le A\|\psi\|_{\mathcal{C}^2(U)}
    \left(\frac{\lambda}{d_t}\right)^n 
      | u_\lambda(a)|,
   \end{equation}
where $A$ is a constant 
depending on $\lambda$
but independent of
$a$, $\psi$, and $n$.
\end{theorem}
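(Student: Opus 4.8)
The plan is to reduce the estimate \eqref{eq:ulam} to the pluriharmonic bound \eqref{eq:ph} by means of a suitable decomposition of $\mathcal C^2$ test functions, and then to absorb the dependence on the base point $a$ into a single psh potential built from the mass distribution of the preimages $d_t^{-n}(f^n)^*\delta_a$. First I would observe that it suffices to treat $\psi$ with $\|\psi\|_{\mathcal C^2(U)}\le 1$ and $\langle\mu,\psi\rangle=0$, and, after shrinking to a neighbourhood $W\Subset V$ of $K$ on which the degrees are computed, to work on $W$. Since $V$ is convex, on $W$ one can write $i\partial\bar\partial\psi = \Omega^+ - \Omega^-$ with $\Omega^\pm$ positive closed $(1,1)$-forms of mass $\lesssim \|\psi\|_{\mathcal C^2}$, and then solve the $\partial\bar\partial$-equation to express $\psi$, modulo a pluriharmonic function $h$ and a controlled constant, as a difference $\psi = h + \varphi^+ - \varphi^-$ of two quasi-psh functions whose Laplacians are dominated by a fixed Kähler form. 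Applying \eqref{eq:ph} to $h$ takes care of the pluriharmonic part with a bound $O(\gamma_0^n)\le O((\beta/d_t)^n)$ times a constant \emph{independent of $a$}.

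The heart of the matter is the two quasi-psh pieces. For a quasi-psh $\varphi$ on $W$ with $dd^c\varphi \ge -\omega$, the quantity $\langle d_t^{-n}(f^n)^*\delta_a, \varphi\rangle$ can be compared to $\langle\mu,\varphi\rangle$ using the standard pluripotential estimate from \cite{DS03,DS10}: one writes $d_t^{-n}(f^n)^*\varphi - \langle\mu,\varphi\rangle = S_n$ and controls $\|S_n\|_{\mathcal C^0}$ or, more precisely, $\sup_a |S_n(a)|$, by the action of $f^n$ on positive closed $(1,1)$-currents, which grows like $(d_{k-1})^n$ relative to the $d_t^{-n}$ normalization, i.e. like $(\beta/d_t)^n$ up to a subexponential factor that is killed by taking $\lambda > \beta$. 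This is exactly the mechanism already used in the proof of Lemma \ref{l:bound-beta}. The subtlety is that the resulting potential $S_n$ is only psh up to a smooth correction and its sup-norm bound deteriorates near $\partial W$; this is why the estimate must be stated with a point-dependent weight $|u_\lambda(a)|$ rather than uniformly. I would set $u_\lambda$ to be (a regularization, bounded above by $-1$, of) $\sum_{n\ge 0} \lambda^{-n} \big((\lambda/d_t)^n \cdot (\text{localized potential of } d_t^{-n}(f^n)^*\delta_a - \mu)\big)$, or more simply the psh potential governing the convergence rate on a slightly larger neighbourhood, following \cite[Lemme 4.2]{DS06}: the series converges in $L^1_{loc}$ precisely because $\lambda < d_t$, it is psh as a decreasing-type limit of psh functions, and its value at $a$ records how far the orbit of $a$ is from being ``generic''. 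One then checks that $|S_n(a)| \le A (\lambda/d_t)^n |u_\lambda(a)|$ by construction.

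Assembling the three contributions — pluriharmonic part, $\varphi^+$, and $\varphi^-$ — gives \eqref{eq:ulam} with $A$ depending on $\lambda$ (through the gap $d_t - \lambda$ and $\lambda - \beta$) and on the fixed neighbourhoods $W, W'$, but not on $a$, $\psi$, or $n$. The main obstacle I anticipate is the bookkeeping in the second paragraph: making the $\partial\bar\partial$-decomposition of $\psi$ on the convex set $V$ with $\mathcal C^2$-controlled quasi-psh pieces, and — more delicately — constructing $u_\lambda$ so that it simultaneously (i) is genuinely psh on a neighbourhood of $K$, (ii) satisfies $u_\lambda < -1$, and (iii) dominates the partial-sum potentials uniformly in $n$; this requires care because the localization of the $(f^n)^*$ action introduces boundary terms on $W$ that have to be reabsorbed using an auxiliary slightly larger neighbourhood and the strict inequality $\lambda < d_t$. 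The rest is the routine pluripotential-theoretic estimate on the mass growth of pushed-forward currents, which is already available from \cite{DS03,DS10,BDR23} and mirrors Lemma \ref{l:bound-beta}.
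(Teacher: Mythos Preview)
Your overall strategy --- reduce to (quasi-)psh test functions and absorb the $a$-dependence into a psh series --- is the same as the paper's, but the mechanism you describe for the $a$-dependence is not the right one, and your proposed $u_\lambda$ does not do the job as written.

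The separate pluriharmonic piece is unnecessary: by linearity one may assume directly that $\psi$ is psh with $\langle\mu,\psi\rangle=0$ and $0\le dd^c\psi\le\omega$. Set $A_n(\psi):=\langle d_t^{-n}(f^n)^*\delta_a,\psi\rangle=d_t^{-n}(f^n)_*\psi(a)$. The point you are missing is the \emph{asymmetry} between the two signs. Since $d_t^{-n}(f^n)_*\psi$ is psh and converges to $0$ in $L^1_{loc}$ at rate $(\lambda'/d_t)^n$ for any $\lambda'>\beta$, the submean inequality gives $A_n(\psi)\lesssim(\lambda/d_t)^n$ \emph{uniformly in $a$} --- no weight needed here, and no current-pushforward estimate is invoked for this direction. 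The difficulty is entirely in bounding $-A_n(\psi)$, because a psh function has no pointwise lower bound; this is not a boundary phenomenon on $W$ but the fact that $d_t^{-n}(f^n)_*\psi$ can be very negative at points $a$ whose preimages concentrate near the pole set of $\psi$ (or, dynamically, near the exceptional set).

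The paper handles this by comparison with a single fixed reference: take $v_0:=\|z\|^2-\langle\mu,\|z\|^2\rangle$, which is psh with $dd^c v_0=\omega$, and set $v_n:=d_t^{-n}(f^n)_*v_0$. Since $dd^c\psi\le\omega=dd^c v_0$, the function $v_0-\psi$ is psh, so writing $-A_n(\psi)=-A_n(v_0)+A_n(v_0-\psi)$ reduces the lower bound to controlling $-v_n(a)$. One then defines
\[
u_\lambda:=\sum_{n\ge 0}\Big(\frac{d_t}{\lambda}\Big)^{n}v_n - C_\lambda,
\]
which is psh (the $v_n$ are psh, bounded above by $\lesssim(\lambda'/d_t)^n$, and the series converges in $L^p$), and by construction $(d_t/\lambda)^n v_n\ge u_\lambda$, i.e. $-v_n(a)\le(\lambda/d_t)^n|u_\lambda(a)|$. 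Your proposed series has the factor $\lambda^{-n}(\lambda/d_t)^n=d_t^{-n}$, which is the wrong scaling, and ``localized potential of $d_t^{-n}(f^n)^*\delta_a-\mu$'' is not a well-defined psh function of $a$; what one actually sums are the pushed-forward reference potentials $v_n$, weighted by $(d_t/\lambda)^n$ so that each term dominates the corresponding error.
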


\begin{proof}
By linearity, it is enough to show the assertion in the assumption that $\psi$ is psh and satisfies 
$\langle \mu, \psi\rangle=0$
and 
$0\leq dd^c \psi \leq dd^c \|z\|^2 = \omega$.
Define also the function $v_0:= \|z^2\|-
\langle\mu, \|z^2\|\rangle$, and observe
that it satisfies 
$\langle\mu, v_0\rangle=0$. Set also
\[v_n:= d_t^{-n} (f^n)_* v_0
\quad
\mbox{ and }
\quad
u_\lambda := \sum_{n=0}^\infty \left(\frac{d_t}{\lambda}\right)^{n} v_n - C_\lambda,\]
 where the constant $C_\lambda$ is 
chosen so
that
\[
u_\lambda < -1
\quad 
\quad
\mbox{ and }
\quad
\quad
\sum_{n\neq n_0} 
\left(\frac{d_t}{\lambda}\right)^{n} v_n
\leq C_\lambda
\quad 
\mbox{ for every } n_0 \in \mathbb N.
\]

Fix $\lambda'$ with $\beta < \lambda' < \lambda$.
As 
$v_n \lesssim (\lambda' / d_t)^n$ 
and 
$\|v_n\|_{L^p}\lesssim 
(\lambda' / d_t)^n$
for any $p\ge1 $
(see for instance
\cite[Theorem 2.33]{DS10}
and
\cite[Corollary 2.35]{DS10}),
we have that 
$u_\lambda$ is a well-defined (i.e., not identically equal to $-\infty$) psh function on $V$.
Setting $A_n (\psi) := \langle d_t^{-n}(f^n)^*\delta_a, \psi\rangle$,
we will show the inequality
\[|A_n (\psi)| \lesssim 
 \left(\frac{\lambda}{d_t}\right)^n 
    | u_\lambda(a)-1 |.
    \]
This will show the assertion, up to replacing $u_\lambda$
by
$u_\lambda -1$.

\medskip

By \cite[ Theorem 2.34]{DS10},
the definition of 
$\beta$, and the choice of
$\lambda$, for every
$\phi \in \mathcal C^2 (U)$
with $\langle\mu, \phi\rangle =0$
and $n\in \mathbb N$
we have
\[
  |\langle d_t^{-n}(f^n)^*\nu,\phi \rangle|\lesssim
  \| \phi\|_{\mathcal{C}^2(U)}
    \left(\frac{\lambda}{d_t}\right)^n
\]
for every \emph{smooth} probability measure $\nu$ compactly supported on $V$  (here the implicit constant can depend on $\nu$, but is independent of $\nu$ 
 if
this is
taken in a compact family of probability measures).
The mean inequality for psh functions implies that we have $A_n (\psi) \lesssim  (\lambda / {d_t})^n 
      $
      (where the implicit constant is now independent of $a$).
          In order to conclude,
      we need to prove a similar bound for $-A_n (\psi)$.

\medskip

It follows from the properties at the beginning of the proof that $v_0 - \psi$ is psh. Hence, we can 
write $\psi = v_0 - (v_0 - \psi)$
as a difference of psh functions, and we have
 \[- A_n (\psi) = -A_n (v_0) + A_n (v_0-\psi).\]
As an upper bound for 
$A_n (v_0-\psi)$ can be found with the same arguments as above, we only need to prove an upper bound for $-A_n (v_0) = - v_n (a)$.
By the definition of $u_\lambda$, we have 
$(d_t/\lambda)^{n} v_n \geq u_\lambda$
for every $n\in \mathbb N$.
It follows that we have 
\[
-A_n (v_0)
= - v_n (a)
\leq 
\left( \frac{\lambda}{d_t}\right)^n 
|u_\lambda 
|.
\]
The assertion follows.
\end{proof}

The following
immediate consequence of Theorem \ref{t:speeda1}
will be used 
to prove Theorem \ref{t:intro-nomasspluripolar}.
\begin{corollary}\label{t:speed}
Let $f:U\to V$ be a polynomial-like map  with dominant topological degree $d_t$ and $\mu$
its equilibrium measure.
 Let $\beta$ be as in \eqref{eq:beta}.
Fix an open set $U'$ with $K\subseteq U'\subseteq U$
and let $\omega_0$ be
a smooth probability measure 
on $U'$.
For any $\lambda$ with
$\beta<\lambda<d_t$
there exists a psh function $u_\lambda$  such that
 for every
 $\psi\in \mathcal{C}^2(U)$
and $n\in \mathbb N$
we have
\begin{equation}
|\langle d_{t}^{-n}(f^n)^*\omega_0-\mu,\psi \rangle|\le A\|\psi\|_{\mathcal{C}^2(U)} \left(\frac{\lambda}{d_t}\right)^{n}
\int_{U'}|u_\lambda|
\omega_0,
\end{equation}
where $A$ is a constant independent of $\omega_0$, $\psi$,
$U'$
and $n$. 
\end{corollary}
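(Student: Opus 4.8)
The plan is to obtain Corollary~\ref{t:speed} from Theorem~\ref{t:speeda1} by a direct integration of the pointwise estimate \eqref{eq:ulam} against the measure $\omega_0$; there is essentially no new idea involved, all the substance being already contained in Theorem~\ref{t:speeda1}.

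First I would disintegrate $\omega_0$ into Dirac masses. Since $\omega_0$ is a probability measure, one can write $\omega_0=\int_{U'}\delta_a\, d\omega_0(a)$, and hence, unwinding the definition of the pullback of a measure (namely $\langle (f^n)^*\nu,\psi\rangle=\langle \nu,(f^n)_*\psi\rangle$, where $(f^n)_*\psi$ is the continuous function $y\mapsto\sum_{f^n(x)=y}\psi(x)$) together with Fubini's theorem, $(f^n)^*\omega_0=\int_{U'}(f^n)^*\delta_a\, d\omega_0(a)$. Using also that $\omega_0$ has total mass $1$ to rewrite $\mu=\int_{U'}\mu\, d\omega_0(a)$, this yields, for every $\psi\in\mathcal{C}^2(U)$ and $n\in\mathbb N$,
\[
\langle d_t^{-n}(f^n)^*\omega_0-\mu,\psi\rangle=\int_{U'}\langle d_t^{-n}(f^n)^*\delta_a-\mu,\psi\rangle\, d\omega_0(a).
\]

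Next, fixing $\lambda$ with $\beta<\lambda<d_t$, I would take $u_\lambda$ and $A$ to be exactly the psh function and the constant provided by Theorem~\ref{t:speeda1} for this $\lambda$; recall that there $A$ depends only on $\lambda$. Because $K\subseteq U'\subseteq U$, every point $a$ in the support of $\omega_0$ lies in $U$, so the bound \eqref{eq:ulam} is available at all such $a$. Passing the absolute value inside the integral above and invoking \eqref{eq:ulam} pointwise,
\[
|\langle d_t^{-n}(f^n)^*\omega_0-\mu,\psi\rangle|\le\int_{U'}|\langle d_t^{-n}(f^n)^*\delta_a-\mu,\psi\rangle|\, d\omega_0(a)\le A\|\psi\|_{\mathcal{C}^2(U)}\left(\frac{\lambda}{d_t}\right)^n\int_{U'}|u_\lambda|\,\omega_0,
\]
which is the claimed inequality, with the same $u_\lambda$ and the same $A$; in particular $A$ is independent of $\omega_0$, $U'$, $\psi$ and $n$, as required.

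The only point that deserves a word — and the closest thing to an obstacle, though a mild one — is the finiteness of the right-hand side, i.e.\ the $\omega_0$-integrability of $u_\lambda$. This holds because $u_\lambda$ is psh on $V$, hence locally bounded above and locally integrable, while $\omega_0$ is a smooth probability measure supported in $U'$, a relatively compact subset of $V$ (indeed $U'\subseteq U\Subset V$); thus $\int_{U'}|u_\lambda|\,\omega_0$ is controlled by $\|u_\lambda\|_{L^1(U')}$ times the sup norm of the density of $\omega_0$. Beyond this, the corollary is genuinely immediate from Theorem~\ref{t:speeda1}.
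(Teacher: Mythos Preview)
Your proposal is correct and matches the paper's approach: the paper simply declares the corollary an ``immediate consequence'' of Theorem~\ref{t:speeda1} without further proof, and the integration of the pointwise estimate \eqref{eq:ulam} against $\omega_0$ that you carry out is exactly the intended one-line argument.
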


\section{Proof of Theorem \ref{t:intro-nomasspluripolar}}\label{s:4}

\subsection{Preparatory
lemmas}
In this section we prove 
 a couple of technical lemmas that we will need in the proof of Theorem \ref{t:intro-nomasspluripolar}.
  Theorem \ref{t:speeda1} and Corollary \ref{t:speed}
 are not used here.
 
 \medskip
 
 We fix a polynomial-like 
 map $f:U\to V$ 
 and
 the constant
 \begin{equation}\label{eq:defM}
     M
      = M(f):=\max_{1\le l\le k}\max_{z\in\overline{ f^{-1}(U)}} \|\nabla f_l(z)\|,
 \end{equation}
where the $f_l$'s
denote the components of $f$.
Recall that we denote by $\omega$ 
the standard Kahler form on $\mathbb C^k$.
For every $m\in \mathbb N$ and $1\leq i,j\leq k$, we also
set
\[\alpha^{(m)}_{i,j} :=
\sum_{l=1}^k
\frac{\partial f_l^m}{\partial z_i} \frac{\partial \bar{f}_{l}^m}{\partial \bar{z}_j},
\]
 where the $f^m_l$'s denote
the components of $f^m$.
Observe that the $\alpha^{(m)}_{i,j}$'s
are smooth functions on $f^{-m} (V)$ and
for every $ N\geq m$
we have 
\[(f^m)^*\left.\omega\right|_{f^{- N}(U)}=\sum_{1\le i,j\le k}\alpha^{(m)}_{i,j}dz_i\wedge d\bar{z}_j.
\]

\begin{lemma}\label{l:estdiff}
 For every 
 $1\le m\le N-1$, we have
 \[|\alpha^{(m)}_{i,j}|_{f^{-N}(U)}\le k^{2m-1}M^{2m}.\]
\end{lemma}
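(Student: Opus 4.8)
The plan is to prove the bound by induction on $m$, exploiting the chain rule that relates the differential of $f^m$ to that of $f^{m-1}$ composed with $f$. Write $f^m = f \circ f^{m-1}$, so that by the chain rule $\frac{\partial f_l^m}{\partial z_i} = \sum_{p=1}^k \left(\frac{\partial f_l}{\partial w_p} \circ f^{m-1}\right) \frac{\partial f_p^{m-1}}{\partial z_i}$. The base case $m=1$ is immediate: $\alpha^{(1)}_{i,j} = \sum_{l=1}^k \frac{\partial f_l}{\partial z_i}\frac{\partial \bar f_l}{\partial \bar z_j}$, and each factor is bounded in modulus by $M$ on $\overline{f^{-1}(U)}$ by the definition \eqref{eq:defM}, so there are $k$ terms each of modulus at most $M^2$, giving $|\alpha^{(1)}_{i,j}| \le k M^2 = k^{2\cdot 1 - 1} M^{2\cdot 1}$.

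For the inductive step, I would first record a pointwise bound on the entries $\frac{\partial f_p^m}{\partial z_i}$ themselves rather than on the $\alpha$'s directly, since this is what the chain rule naturally controls: set $B_m := \max_{p,i} \sup_{f^{-N}(U)} \left|\frac{\partial f_p^m}{\partial z_i}\right|$. The chain rule gives $\left|\frac{\partial f_l^m}{\partial z_i}\right| \le \sum_{p=1}^k \left|\frac{\partial f_l}{\partial w_p}\right| \cdot \left|\frac{\partial f_p^{m-1}}{\partial z_i}\right| \le k M B_{m-1}$, where the bound $\left|\frac{\partial f_l}{\partial w_p}\right| \le M$ is legitimate because the point $f^{m-1}(z)$ lies in $\overline{f^{-1}(U)}$ whenever $z \in f^{-N}(U)$ and $m \le N-1$ (since then $f^{m-1}(z) \in f^{-(N-m+1)}(U) \subseteq f^{-2}(U) \subseteq \overline{f^{-1}(U)}$). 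Hence $B_m \le k M B_{m-1}$, and since $B_1 \le M$ we get $B_m \le k^{m-1} M^m$. Then $|\alpha^{(m)}_{i,j}| \le \sum_{l=1}^k \left|\frac{\partial f_l^m}{\partial z_i}\right| \left|\frac{\partial \bar f_l^m}{\partial \bar z_j}\right| \le k \cdot B_m^2 \le k \cdot k^{2m-2} M^{2m} = k^{2m-1} M^{2m}$, as desired.

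The only genuine subtlety — and the step I would flag as requiring care rather than difficulty — is the domain bookkeeping: one must check that at every application of the chain rule the intermediate point lies in a set on which the first-order derivatives of $f$ are controlled by $M$. Concretely, the definition of $M$ in \eqref{eq:defM} takes the supremum over $\overline{f^{-1}(U)}$, so one needs $f^{j}(z) \in \overline{f^{-1}(U)}$ for all $0 \le j \le m-1$ when $z \in f^{-N}(U)$; this holds precisely because $m-1 \le N-2$ forces $f^j(z) \in f^{-(N-j)}(U) \subseteq f^{-2}(U)$, and $f^{-2}(U) \subseteq f^{-1}(f^{-1}(U)) \subseteq \overline{f^{-1}(U)}$ since $f^{-1}(U) \subseteq f^{-1}(V) = U$. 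I would state this containment once at the start of the proof and then invoke it freely. With that in place the estimate is a short induction and no further obstacle arises.
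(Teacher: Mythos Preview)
Your proposal is correct and follows essentially the same route as the paper: first establish the bound $\left|\partial f_l^m/\partial z_i\right|\le k^{m-1}M^m$ on $f^{-N}(U)$ by induction via the chain rule $f^m=f\circ f^{m-1}$, then square and sum over $l$ to get $|\alpha^{(m)}_{i,j}|\le k\cdot(k^{m-1}M^m)^2=k^{2m-1}M^{2m}$. Your domain bookkeeping is in fact slightly more explicit than the paper's, which simply records $f^{m-N}(U)\Subset f^{-1}(U)$ before applying the bound $M$.
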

\begin{proof}
 We claim that for 
 every
 $z\in \overline{f^{-N}(U)}$,
 every
 $1\le m\le N-1$, and every $1\leq i,j\leq k$
 we have
\begin{equation}\label{eq:estdiff}
    \left|\frac{\partial f_j^m(z)}{\partial z_i}\right|\le k^{m-1} M^m.
\end{equation}
For $m=1$,  \eqref{eq:estdiff} 
follows from the definition of 
$M$. 
Assume now that   \eqref{eq:estdiff}  holds for
$m-1\ge 0$ instead of $m$.
As $f^{m-N}(U)\Subset f^{-1}(U)$,
we have $\left|\frac{\partial f_j}{\partial z_l}\left(f^{m-1}(z)\right)\right|\le M$ for every $1\leq j,l\leq k$.
So, for every 
$1\le i,j\le k$,
we have
\begin{align*}
 \left|\frac{\partial f_j^m(z)}{\partial z_i}\right|
 &=\left|\sum_{l=1}^k \frac{\partial f_j}{\partial z_l}\left(f^{m-1}(z)\right)\frac{\partial f_l^{m-1}(z)}{\partial z_i}\right|
\le\sum_{l=1}^k \left|\frac{\partial f_j}{\partial z_l}\left(f^{m-1}(z)\right)\right|\left|\frac{\partial f_l^{m-1}(z)}{\partial z_i}\right| \\
&\le\sum_{l=1}^k M \cdot k^{m-2} M^{m-1}
 = k^{m-1} M^m
\end{align*}
on $\overline{f^{-N}(U)}$.
Hence, 
\eqref{eq:estdiff} holds for all $1\leq m\leq N-1$. It follows that we have
\begin{align*}
    |\alpha^{(m)}_{i,j}(z)|=\left|\sum_{l=1}^k\frac{\partial f_l^m}{\partial z_i} \frac{\partial \bar{f}_{l}^m}{\partial \bar{z}_j}\right|\le \sum_{l=1}^k\left|\frac{\partial f_l^m}{\partial z_i}\right| \left|\frac{\partial \bar{f}_{l}^m}{\partial \bar{z}_j}\right|\le k^{2m-1}M^{2m},
\end{align*}
as desired.
\end{proof}

From now on, for simplicity, given $m_1\leq \dots\leq m_k\in\mathbb N$,
we will use the notation
\begin{equation}\label{e:notation-Omega}
\Omega_{m_1, \dots, m_k}
:=
({f^{m_1}})^*\omega\wedge \dots\wedge({f^{m_{k}}})^*\omega.
\end{equation}
Observe that $\Omega_{m_1, \dots, m_k}$
is a smooth volume form on $f^{-m_k} (V)$.
We will also denote by 
$\varphi_{m_1, \dots, m_k}$ the Radon-Nikodym density of $\Omega_{m_1, \dots, m_k}$
with respect to $\omega^k$. This is a positive
smooth function on $f^{-m_k} (V)$.
The following bound 
for ${\varphi}_{m_1,\dots, m_k}$
immediately follows from Lemma \ref{l:estdiff}.

\begin{corollary}\label{c:phi}
There exists a constant $C$ 
such that, for every
 $0\le m_1\le...\le m_k\le n\leq
 N-1$ 
we have
\[\varphi_{m_1, \dots, m_k}\le C {(kM)}^{2kn} 
\quad 
\mbox{ on }
\quad
{f^{-N}(U)}.\]
\end{corollary}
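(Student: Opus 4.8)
The plan is to derive Corollary \ref{c:phi} directly from Lemma \ref{l:estdiff} by expanding the wedge product $\Omega_{m_1,\dots,m_k}$ into monomials and bounding each coefficient. First I would write, for each $1\le s\le k$,
\[
(f^{m_s})^*\omega\big|_{f^{-N}(U)}
= \sum_{1\le i_s, j_s\le k} \alpha^{(m_s)}_{i_s,j_s}\, dz_{i_s}\wedge d\bar z_{j_s},
\]
so that the full product $\Omega_{m_1,\dots,m_k}$ is a sum over all multi-indices $(i_1,\dots,i_k)$ and $(j_1,\dots,j_k)$ of terms
\[
\Big(\prod_{s=1}^k \alpha^{(m_s)}_{i_s,j_s}\Big)\, dz_{i_1}\wedge d\bar z_{j_1}\wedge \dots \wedge dz_{i_k}\wedge d\bar z_{j_k}.
\]
Each nonzero wedge monomial of top degree equals $\pm \omega^k$ up to a fixed combinatorial factor (after reordering the $2k$ differentials into the standard order), so the Radon–Nikodym density $\varphi_{m_1,\dots,m_k}$ is a sum of at most $k^{2k}$ such products, each multiplied by a universal constant coming from this reordering.

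Next I would plug in the pointwise bound from Lemma \ref{l:estdiff}. Since $m_1\le\dots\le m_k\le n\le N-1$, each factor satisfies $|\alpha^{(m_s)}_{i_s,j_s}|\le k^{2m_s-1}M^{2m_s}\le k^{2n}M^{2n}$ on $f^{-N}(U)$ (using $m_s\le n$, and absorbing the exponent shift $-1$ into the inequality, or simply bounding $k^{2m_s-1}\le k^{2m_s}\le k^{2n}$). Hence each product of $k$ such factors is bounded by $(kM)^{2kn}$, and summing over the at most $k^{2k}$ monomials and collecting the reordering constants gives
\[
\varphi_{m_1,\dots,m_k} \le C\,(kM)^{2kn}
\quad\text{on } f^{-N}(U),
\]
where $C$ depends only on $k$ (for instance $C = k^{2k}$ times the combinatorial reordering factor). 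Note one should also handle the degenerate case $m_1=0$: if some $m_s=0$ then $\alpha^{(0)}_{i,j}=\delta_{ij}$, which is trivially bounded by $1\le k^{2n}M^{2n}$ (assuming $M\ge 1$, which holds since $f$ is expanding enough — or one simply adjusts $C$), so the same estimate goes through.

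There is essentially no obstacle here; the only minor point to be careful about is that Lemma \ref{l:estdiff} is stated for $1\le m\le N-1$, so one needs $m_k\le N-1$, which is guaranteed by the hypothesis $m_k\le n\le N-1$, and the boundary case $m_1=0$ must be addressed separately as above. The bookkeeping of the sign and combinatorial constant in passing from wedge monomials to $\omega^k$ is routine and can be folded entirely into the constant $C$, which is why the statement only asserts existence of such a $C$.
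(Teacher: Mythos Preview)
Your proposal is correct and is precisely the natural unpacking of the paper's one-line justification (``immediately follows from Lemma \ref{l:estdiff}''): expand the wedge product, bound each coefficient $\alpha^{(m_s)}_{i_s,j_s}$ by $k^{2m_s-1}M^{2m_s}\le (kM)^{2n}$, and absorb the combinatorial factors into $C$. The only small caveat is that ``adjusting $C$'' does not literally fix the $m_s=0$ case if $kM<1$ (since $(kM)^{2n}\to 0$), but this is harmless: one may simply replace $M$ by $\max(M,1)$ throughout, which does not affect any later use of the corollary.
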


Observe
that
$\Omega_{m_1, \dots, m_k}$
as above
satisfies 
\[
\int_{f^{-m_k} (V)} \Omega_{m_1, \dots, m_k} \lesssim d_t^{m_k},
\]
where the implicit constant is independent of $m_1, \dots, m_k$,
see for instance \cite{DS03,DS10}.
In the following, 
we will 
need the following better bound for the
integral above in the case where $m_1=1$.

\begin{lemma}\label{l:intlessdegree}
 There exists a function $\eta:\mathbb N\to \R$
 with
 $\limsup_{n\to\infty}\eta(n)^{1/n}=1$
 such that
 \begin{equation}\label{eq:ledk-1}
   \int_{ f^{-N}(V)} 
   \Omega_{0,  m'_1, \dots, m'_{k-1}}
   \le \eta(N) (d_{k-1})^{N}   
 \end{equation}
for every $1\le  m'_1\leq \dots
\leq m'_{k-1}\le N-1$.
\end{lemma}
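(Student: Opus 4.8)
The quantity $\Omega_{0, m'_1, \dots, m'_{k-1}} = \omega \wedge (f^{m'_1})^*\omega \wedge \dots \wedge (f^{m'_{k-1}})^*\omega$ pairs one copy of the Kähler form with a product of $(k-1)$ pullbacks, and the claim is that its integral over $f^{-N}(V)$ grows like $(d_{k-1})^N$ up to a subexponential factor, rather than the naive $d_t^N$. The plan is to reduce this to a statement about the mass growth of a positive closed $(1,1)$-current, since the definition of $d_{k-1}$ precisely controls such growth. First I would write $(f^{m'_{k-1}})^*\omega = (f^{m'_{k-1}})^*\omega$ and peel it off: by a change of variables $f^{m'_{k-1}}$ (which is a ramified $d_t^{m'_{k-1}}$-cover onto its image),
\[
\int_{f^{-N}(V)} \omega \wedge (f^{m'_1})^*\omega \wedge \dots \wedge (f^{m'_{k-1}})^*\omega
\]
should be rewritten in terms of a pushforward. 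More precisely, set $m = m'_{k-1}$ and $S := (f^{m'_1})^*\omega \wedge \dots \wedge (f^{m'_{k-2}})^*\omega \wedge \omega$, a positive closed $(k-1,k-1)$-current (after reindexing so the smallest index is the single $\omega$); then the integral equals $\langle (f^m)^*\omega \wedge S, \mathbf 1\rangle$ over the relevant domain, which by the projection formula equals $\langle \omega \wedge (f^m)_* S, \mathbf 1\rangle$ up to keeping track of domains — i.e.\ essentially the mass $\|(f^m)_* S\|$. Iterating this peeling one pullback at a time, or handling it in one step, the integral is bounded by $\|(f^{m})_*(\text{positive closed }(1,1)\text{-current of controlled mass})\|$ composed appropriately.

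The cleanest route is probably the following: group the exponents. Write $\Omega_{0,m'_1,\dots,m'_{k-1}} = \omega \wedge (f^{m'_1})^*\big(\omega \wedge (f^{m'_2 - m'_1})^*(\cdots)\big)$ using $(f^{a+b})^* = (f^a)^* (f^b)^*$, so that the integral telescopes into a composition of operators of the form ``wedge with $\omega$, then pull back by $f^{\Delta_i}$'' where $\Delta_i = m'_i - m'_{i-1} \geq 0$ and $\sum \Delta_i \le N$. Each elementary step $T \mapsto (f^\Delta)^*(\omega \wedge T)$ on positive closed currents, after passing to masses via pushforward duality and using that the mass of $\omega \wedge T$ is comparable to that of $T$ on a slightly shrunk domain, multiplies the mass by at most $\sup_S \|(f^\Delta)_* S\|$, which by Definition \ref{def:degrees} is $\le C_\varepsilon (d_{k-1}+\varepsilon)^\Delta$ for the currents of bidegree $(1,1)$ that actually arise here (bidegree $(k-1,k-1)$ currents pushed forward — note one must check the bidegrees match what $d_{k-1}$ controls, namely the growth of $\|(f^n)_*(S)\|$ for $(k-p,k-p)=(1,1)$-currents, i.e.\ $p=k-1$). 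Multiplying over all $i$ gives a bound $\prod_i C_\varepsilon (d_{k-1}+\varepsilon)^{\Delta_i} \le C_\varepsilon^{k}(d_{k-1}+\varepsilon)^{N}$. Since the number of factors $k$ is fixed and $\varepsilon>0$ is arbitrary, setting $\eta(N)$ to absorb the constant and the $\varepsilon$-slack (taking $\varepsilon = \varepsilon_N \to 0$ slowly, or using a standard $\limsup$ argument) yields $\limsup_N \eta(N)^{1/N} = 1$ and the desired inequality.

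The main obstacle I anticipate is bookkeeping with the \emph{domains of definition}: $\Omega_{m_1,\dots,m_k}$ is only smooth on $f^{-m_k}(V)$, pushforwards $(f^m)_*$ of currents are defined on the image which shrinks, and the mass notion underlying $d_{k-1}$ (Definition \ref{def:degrees}) is taken over a neighbourhood $W \Subset V$ of the filled Julia set $K$ with auxiliary $W' \Subset V$. Since we are integrating over $f^{-N}(V)$, which is well inside $V$ for $N$ large and contains $K$, one needs to fix a nested sequence of neighbourhoods and verify that each peeling step keeps the current supported where the mass estimates apply, with the comparison constants (from $\omega \wedge (\cdot)$ and from restricting domains) independent of $N$ and of the $m'_i$. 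This is exactly the kind of technical point the paper flags as ``usually technically more involved'' for polynomial-like maps; but it is routine given the cited mass-growth results in \cite{DS03,DS10} and the non-decreasing property of $\{d_p\}$ from \cite{BDR23}, which guarantees $d_{k-1} = \max_{0\le p\le k-1} d_p$ so that every bidegree arising among the $(k-1)$ pulled-back factors is controlled by the single constant $d_{k-1}$.
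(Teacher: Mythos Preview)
Your approach is essentially the same as the paper's: both rest on the Gromov--Dinh--Sibony telescoping argument that peels off the pullbacks $(f^{\Delta_i})^*$ one at a time and bounds each step by the dynamical degree $d_{k-1}$. The only cosmetic difference is that the paper first replaces the undecorated factor $\omega$ by a current of integration $[X]$ on a $(k-1)$-dimensional analytic subset, invokes the telescoping bound \eqref{Xdeltap} in that setting (citing \cite[Lemma A.2.6]{B16} for the details), and then recovers the statement for $\omega$ by writing $\omega$ as an average of such $[X]$'s; you instead run the telescoping directly for positive closed currents using Definition~\ref{def:degrees}. Both routes are valid and lead to the same subexponential $\eta(N)$; the paper's detour through $[X]$ simply matches the form in which the cited reference is stated, while your version is marginally more direct. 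Your identification of the domain bookkeeping (nested neighbourhoods of $K$, keeping the pushed-forward currents supported where the mass estimates of Definition~\ref{def:degrees} apply) as the main technical point is exactly right, and is precisely what the cited proof in \cite{B16} handles.
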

\begin{proof}
   Let $X$ be an analytic subset of $V$ of pure dimension 
   $k-1$.
   There exists a function  $ \eta : \mathbb N\to \mathbb R$ with $\limsup_{n\to\infty} \eta(n)^{1/n}=1$ and depending only from
   the mass of $[X]$
   such that
\begin{equation}\label{Xdeltap}
    \int_{{f^{-N}(V)}}[X]
    \wedge 
        (f^{m'_1})^*\omega\wedge
    \ldots\wedge (f^{m'_{k-1}})^*\omega
    \le \eta(N)\,{(d_{k-1})^N}
\end{equation}
for every $1\leq
 m'_1\leq \dots \leq m'_{k-1}\leq N-1$.
The proof of \eqref{Xdeltap}
follows the strategy used by Gromov
to estimate
the topological entropy of endomorphisms of
$\mathbb P^k$, see for instance \cite{G03},
and adapted by Dinh and Sibony
\cite{DS03,DS10}
to the setting of
polynomial-like maps.
Since only minor modifications are needed, we
refer to
\cite[Lemma A.2.6]{B16} for a complete proof.
The inequality \eqref{eq:ledk-1} is deduced from \eqref{Xdeltap}
(by possibly multiplying the function $\eta(n)$ by a bounded factor)
as $\omega$ can be written as an
average of currents of integration on $(k-1)$-dimensional
analytic sets.
\end{proof}

 Finally, we will need the following lemma
about the integrals of psh functions.

\begin{lemma}\label{l:limitmeasure}
 Let $u$ be a psh function on $V$. Then, 
there exists a 
positive  constant $A_1$ depending on $u$
 such that 
for every
$0\le m_1\le...\le m_k\leq N\in \mathbb N$ 
we have
\[
\int_{f^{-N}(U)}|u|
 \Omega_{m_1, \dots, m_k}
  \le 
 A_1
  +
  m_k^2
  \int_{f^{-N}(U)}
 \Omega_{m_1, \dots, m_k}.
\]
\end{lemma}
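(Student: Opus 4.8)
The plan is to bound the $\Omega_{m_1,\dots,m_k}$-mass of the sublevel sets $\{u<-t\}$ by a quantity that decays exponentially in $t$, and then to integrate this estimate layer by layer. The whole difficulty lies in the fact that $u$ may equal $-\infty$ along a pluripolar set; if $u$ were bounded the inequality would be elementary, since $\int|u|\,\Omega\le\|u\|_\infty\int\Omega$ and $\int_{f^{-N}(U)}\Omega\lesssim d_t^{m_k}$. Throughout I write $\Omega:=\Omega_{m_1,\dots,m_k}=\varphi_{m_1,\dots,m_k}\,\omega^k$, let $m_\Omega$ denote the associated positive measure (so $m_\Omega(E)=\int_E\Omega$), and use that $f^{-N}(U)\subseteq U\subseteq\overline U\Subset V$ for every $N\ge 0$. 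I may also assume $m_k\le N-1$: the case $m_k=N$ is identical once one observes that $f^{N-1}\big(\overline{f^{-N}(U)}\big)\subseteq\overline{f^{-1}(U)}$, so that the estimates of Lemma \ref{l:estdiff} and Corollary \ref{c:phi} remain valid with the exponent $m_k=N$.

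\medskip

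The key step I would first establish is: there exist $\alpha=\alpha(u)>0$ and $C_1=C_1(u,f)>0$ such that, for every $t\ge 0$,
\[
m_\Omega\big(\{u<-t\}\cap f^{-N}(U)\big)\ \le\ C_1\,(kM)^{2km_k}\,e^{-\alpha t}.
\]
To prove it, apply Corollary \ref{c:phi} with $n=m_k$, which gives $\Omega\le C\,(kM)^{2km_k}\,\omega^k$ on $f^{-N}(U)$. Since $u$ is a genuine psh function on $V$ (in particular $u\not\equiv-\infty$), Skoda's theorem on the exponential integrability of plurisubharmonic functions, applied on the compact set $\overline U\subseteq V$, provides some $\alpha>0$ with $C_u:=\int_{\overline U}e^{-\alpha u}\,\omega^k<\infty$. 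Combining the two bounds yields $\int_{f^{-N}(U)}e^{-\alpha u}\,\Omega\le C\,C_u\,(kM)^{2km_k}$, and Markov's inequality applied to the nonnegative function $e^{-\alpha u}$ gives the claim with $C_1:=C\,C_u$.

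\medskip

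To conclude, I would set $c:=\sup_{\overline U}u<\infty$ and $t_1:=\max\{m_k^2,|c|\}$, and split Cavalieri's identity
\[
\int_{f^{-N}(U)}|u|\,\Omega=\int_0^{\infty}m_\Omega\big(\{|u|>t\}\cap f^{-N}(U)\big)\,dt
\]
at the level $t_1$. On $[0,t_1]$ the integrand is at most $m_\Omega(f^{-N}(U))=\int_{f^{-N}(U)}\Omega$; on $[t_1,\infty)$ one has $\{|u|>t\}=\{u<-t\}$ on $f^{-N}(U)$ (because $u\le c\le t$ there), so by the key estimate the contribution of this range is at most $\tfrac{C_1}{\alpha}(kM)^{2km_k}e^{-\alpha t_1}$. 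Using once more $\int_{f^{-N}(U)}\Omega\le\int_{f^{-m_k}(V)}\Omega_{m_1,\dots,m_k}\lesssim d_t^{m_k}$: when $m_k^2\ge|c|$, the first piece equals $m_k^2\int_{f^{-N}(U)}\Omega$ and the second is bounded uniformly in $m_k$, since $(kM)^{2km_k}e^{-\alpha m_k^2}\to 0$ as $m_k\to\infty$; when $m_k^2<|c|$, the integer $m_k$ ranges in a finite set depending only on $u$ and $f$, so both pieces are bounded by a constant depending only on $u$ and $f$. Collecting the bounded contributions into $A_1=A_1(u,f)$ proves the lemma.

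\medskip

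The main obstacle is the exponential mass bound, and especially its uniformity in the parameters: the right-hand side depends on $m_1,\dots,m_k,N$ only through the exponential factor $(kM)^{2km_k}$, and the reason the statement can afford the \emph{quadratic} weight $m_k^2$ — rather than a linear weight $m_k$ — is precisely that this exponential growth is absorbed by the super-exponential decay $e^{-\alpha m_k^2}$ obtained by truncating the layer-cake integral at level $t_1\approx m_k^2$; a truncation at level $\approx m_k$ would leave the factor $(kM)^{2km_k}e^{-\alpha m_k}$, which need not remain bounded.
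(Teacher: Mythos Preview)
Your proof is correct and follows essentially the same approach as the paper's: both arguments combine the pointwise density bound $\varphi_{m_1,\dots,m_k}\le C(kM)^{2km_k}$ from Corollary~\ref{c:phi} with Skoda's exponential integrability of psh functions, and both exploit that a truncation at level $m_k^2$ makes the residual term $(kM)^{2km_k}e^{-\alpha m_k^2}$ uniformly bounded. The only cosmetic difference is that you phrase the splitting via the layer-cake formula and an explicit case analysis on whether $m_k^2\ge|c|$, whereas the paper writes the split directly as $\int_{\{u\ge -n^2\}}+\int_{\{u<-n^2\}}$ with $n=m_k$ and absorbs the small-$m_k$ cases implicitly into the constant $A_1$.
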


\begin{proof}
For every $n\leq N\in \mathbb N$,
set
\[W_{n,N}:=f^{-N}(U)\cap \{u<-n^2\}.\]
Then, for all $m_1, \dots, m_k,N$ 
as in the statement and $m_k \leq n \leq N$,
we have
\[
\begin{aligned}
  \int_{f^{-N}(U)}
|u
    | 
 \Omega_{m_1, \dots, m_k}
 & \le \int_{f^{-N}(U)
\setminus W_{n,N}
}
|u
    | 
\Omega_{m_1, \dots, m_k}
  +  \int_{W_{n,N}}|u
    | 
  \Omega_{m_1, \dots, m_k} 
  \\
  &\le  n^2
 \int_{f^{-N}(U)}
 \Omega_{m_1, \dots, m_k}
  + \int_{W_{n,N}}|u
    |  {\varphi_{m_1, \dots, m_k}\omega^k}.
    \end{aligned}\]
Hence, it is enough to show that the last integral in the above expression
is bounded uniformly in $m_1, \dots, m_k, n$, and $N$ (we will actually show that it tends to $0$ 
if $n\to \infty$).

\medskip

 As we have $W_{n,N}\subset \{u< -n^2\}$, by the Skoda estimates for psh functions \cite{S82}
there exists  $\alpha>0$ and $n_0\in\mathbb{N}$ such that for any $n>n_0$ 
we have
\begin{equation}\label{eq:len3}
  \int_{W_{n,N}}|u
    |   \omega^k\le  C_1  e^{-\alpha n^2},
  \end{equation}
 for some positive
 constant
 $C_1$ independent from $n$.
By Corollary
\ref{c:phi},
there exist two
constants
 $ C_2$ and $M$, independent from $m_1, \dots, m_k$, $n$, and $N$  
 such that $\varphi_{m_1, \dots, m_k}\le  C_2 (k M)^{2kn}$
 on $f^{-n} (U)\supseteq f^{-N} (U)$.
We deduce from this estimate and
 \eqref{eq:len3} that we have
\[
\int_{W_{n,N}}
|u
    | 
\varphi_{m_1, \dots, m_k} \omega^k 
 \le   C_2 (kM)^{2kn} \int_{W_{n,N}}|u
    |   \omega^k 
    \le  C_1C_2 (kM)^{2kn}  e^{-\alpha n^2} 
    \]
for every $n > n_0$.
Choosing 
$A_1:= \max_{n\in\mathbb N}
 C_1C_2 (kM)^{2kn}  e^{-\alpha n^2}$   completes the proof.
\end{proof} 
\subsection{Proof of Theorem \ref{t:intro-nomasspluripolar}} 
We can now 
prove the 
following statement, which,
 by Lemma \ref{l:bound-beta},
gives
a more precise version of Theorem \ref{t:intro-nomasspluripolar}.

\begin{theorem}\label{t:logp-nomasspluripolar}
  Let $f:U\to V$ 
be a polynomial-like map  with dominant
topological degree
and
$\beta$ be as in \eqref{eq:beta}.
Then, every 
   ergodic measure
   $\nu$
   whose measure-theoretic entropy
  satisfies 
   $h_\nu(f)>\log \beta$
  is supported on the Julia set $J$. 
   \end{theorem}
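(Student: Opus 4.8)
The plan is to run a quantified version of Gromov's entropy bound on the space of orbits. Let $\nu$ be an ergodic measure with $h_\nu(f) > \log\beta$; I want to show $\Supp\nu \subseteq J = \Supp\mu$. Suppose not, so there is a point $x_0 \in \Supp\nu$ and a small ball $B$ around $x_0$ with $B \cap J = \emptyset$ and $\nu(B) =: \delta > 0$. The idea is to compare two volume-growth rates along orbit segments. On one hand, the Brin--Katok / Gromov lower bound for entropy: because $\nu(B)>0$ and $h_\nu(f)>\log\beta$, the ``Bowen balls'' seen through the first $n$ coordinates of the orbit space must have volume (measured against $\Omega_{0,1,\dots,k-1}$-type forms, i.e.\ the graph of $(\mathrm{id},f,\dots,f^{k-1})$ restricted to orbits starting in $B$) growing at least like $(\nu(B)\,e^{h_\nu(f)})^n \gtrsim \lambda_1^n$ for some $\lambda_1 > \beta$. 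On the other hand, Lemma \ref{l:intlessdegree} together with Corollary \ref{c:phi} will give an upper bound for exactly such volumes. The key new input is that since $B$ avoids $J$, the restriction of the pushed-forward measures $d_t^{-n}(f^n)^*\delta_a$ to (a neighbourhood of) $B$ decays, and this is where Theorem \ref{t:speeda1} / Corollary \ref{t:speed} enter: they quantify that the mass $\langle d_t^{-n}(f^n)^*\delta_a, \chi\rangle$ for $\chi$ a bump supported near $x_0$ is $\lesssim (\lambda/d_t)^n |u_\lambda(a)|$ with $\beta < \lambda < d_t$.

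More concretely, here is the order of steps I would carry out. First, fix $\lambda$ with $\beta < \lambda < d_t$, and pick $\lambda_1$ with $\beta < \lambda_1 < \min(\lambda, e^{h_\nu(f)})$; the point is $e^{h_\nu(f)} > \beta$ so this is possible, and I also want $\lambda_1 > \gamma_0 d_t / 1$ etc.\ — the constraints get pinned down as the computation proceeds. Second, set up the orbit space $\Gamma_n = \{(z, f(z), \dots, f^{n-1}(z)) : z \in f^{-(n-1)}(U)\}$ and the projections; the natural volume forms on $\Gamma_n$ pull back to combinations of the $\Omega_{m_1,\dots,m_k}$ with $0 \le m_i \le n-1$. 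Third, invoke the Gromov-type lower bound: integrating the Lyapunov-charge / counting argument against $\nu$ restricted to the cylinder over $B$, obtain $\int_{\Gamma_n \cap \pi_0^{-1}(B)} (\text{form}) \gtrsim \lambda_1^n$ — here one uses that a positive-measure set of orbits stays ``$B$-marked'' with definite frequency and that $h_\nu(f)$ lower-bounds exponential separation. Fourth, estimate the same integral from above: decompose the orbit-space volume form into a sum of $\Omega_{m_1,\dots,m_k}$'s; the terms with $m_1 \ge 1$ are handled by Lemma \ref{l:intlessdegree} giving $\eta(n)(d_{k-1})^n \le \eta(n)\beta^n$; the terms with $m_1 = 0$ are exactly those involving a genuine pushforward $(f^{\text{stuff}})^*[B]$, and here Corollary \ref{t:speed} applied with a bump $\omega_0$ near $x_0$ (using $B \cap J = \emptyset$, so $\langle\mu,\chi\rangle = 0$) contributes the decaying factor $(\lambda/d_t)^n$, which against the trivial growth $d_t^n$ of a single full pushforward gives $\lesssim \lambda^n \int |u_\lambda|\,\omega_0$, controlled by Lemma \ref{l:limitmeasure}. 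Choosing the $\lambda$'s so that $\lambda_1 > \lambda$ and $\lambda_1 > \beta \ge d_{k-1}$ simultaneously — wait, one needs $\lambda < \lambda_1$ for the contradiction but $\beta < \lambda$; this is fine as long as $e^{h_\nu} > \lambda$ is arrangeable, which it is since we only need $\lambda$ arbitrarily close to $\beta$ from above and $\lambda_1$ between $\lambda$ and $e^{h_\nu}$. Comparing $\lambda_1^n \lesssim (\text{something} \le \lambda^n + \eta(n)\beta^n)$ as $n\to\infty$ yields a contradiction.

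The main obstacle, I expect, is the lower bound in the third step: making precise the assertion that positivity of $h_\nu(f)$ forces exponential volume growth of the orbit-space slices over $B$ at a rate at least $e^{h_\nu(f)} \nu(B)$ (up to subexponential corrections). Gromov's original argument bounds topological entropy \emph{above} by a volume growth; here one needs the reverse inequality for a \emph{specific} ergodic measure, restricted to orbits passing through $B$. I would get this from a Brin--Katok-type statement: by ergodicity, $\nu$-a.e.\ orbit spends a definite asymptotic fraction of its time in $B$, and the number of $(n,\varepsilon)$-separated orbit pieces inside the cylinder over $B$ grows like $e^{n h_\nu(f)}$ up to $\varepsilon$- and subexponential errors; each such piece contributes a definite amount of volume to the appropriate analytic slice of $\Gamma_n$ (using that the slices are complex analytic of the right dimension and the Lelong-number/ lower-volume estimate for analytic sets). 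Packaging this cleanly — and checking the geometric measure theory that separated orbit points translate into additive volume on the analytic slice, uniformly — is the technical heart. The rest (steps two, four) is bookkeeping with the lemmas already proved above, plus a careful choice of the chain of constants $\beta < \lambda' < \lambda < \lambda_1 < e^{h_\nu(f)}$.
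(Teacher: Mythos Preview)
Your overall architecture --- a quantified Gromov volume bound on the orbit graph, with Corollary~\ref{t:speed} supplying the decay because the bump function is supported away from $J$, and Lemmas~\ref{l:intlessdegree} and~\ref{l:limitmeasure} controlling the remaining integrals --- is exactly the paper's strategy. However, two points deserve correction.

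\medskip

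\textbf{The lower bound is unnecessary.} Your step~3 (volume growth $\gtrsim \lambda_1^n$ over the cylinder on $B$ via Brin--Katok plus a Lelong lower-volume estimate) is precisely what is packaged in the inequality $h_\nu(f)\le h_t(f,F)$ together with Gromov's bound $h_t(f,F)\le \mathrm{lov}(f,W)$. The paper does not unpack this: it fixes a closed $F\subset U\setminus J$, proves directly that $\mathrm{lov}(f,W)\le \log\lambda$ for a neighbourhood $W$ of $F$ disjoint from $J$, and then invokes the relative variational principle to conclude $\nu(F)=0$. So your ``main obstacle'' dissolves into a black box, and only the \emph{upper} bound on the graph volume needs to be established.

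\medskip

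\textbf{The case split in step~4 is backwards.} You write that the terms with smallest index $m_1\ge 1$ are handled by Lemma~\ref{l:intlessdegree} alone, and that the equidistribution Corollary~\ref{t:speed} is only needed when $m_1=0$. This is inverted, and the first claim is actually false: Lemma~\ref{l:intlessdegree} bounds $\int_{f^{-N}(V)}\Omega_{0,m'_1,\dots,m'_{k-1}}$, i.e.\ it \emph{requires} a zero first index. For general $0\le m_1\le\cdots\le m_k\le N-1$ one has
\[
\int_{f^{-N}(U)}\Omega_{m_1,\dots,m_k}=d_t^{\,m_1}\int_{f^{-(N-m_1)}(U)}\Omega_{0,m_2-m_1,\dots,m_k-m_1}\lesssim d_t^{\,m_1}(d_{k-1})^{N-m_1},
\]
which can be as large as $d_t^{\,N}$ --- far too big. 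What saves the day is precisely the restriction to $W$ with $\overline W\cap J=\emptyset$: writing $\Omega_{m_1,\dots,m_k}=(f^{m_1})^*\Omega'$ with $\Omega'=\Omega_{0,m_2-m_1,\dots,m_k-m_1}$ and applying Corollary~\ref{t:speed} with a bump $\psi$ supported in $\tilde W\supset W$ (so $\langle\mu,\psi\rangle=0$) replaces the $d_t^{\,m_1}$ by $\lambda^{m_1}\int|u_\lambda|\,\Omega'$. Lemma~\ref{l:limitmeasure} then absorbs $|u_\lambda|$ into a polynomial factor, and finally Lemma~\ref{l:intlessdegree} gives $(d_{k-1})^{N-m_1}$ for the remaining integral. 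Thus \emph{every} term is treated uniformly by this three-step chain, yielding the bound $\lesssim\tilde\eta(N)\lambda^{m_1}(d_{k-1})^{N-m_1}\le\tilde\eta(N)\lambda^N$. Once you reorganise step~4 this way, the argument goes through with no further difficulty.
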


\begin{proof}
 Fix  $\nu$
as in the statement and  $\lambda$ with $\beta<\lambda<  e^{h_\nu(f)} \leq d_t$.
Let $F\subset U\setminus J$ be a closed set.
We
are going to show that we have
$h_t (f, F)\le \log{\lambda}$, where $h_t(f,F)$ denotes the topological entropy of $f$ on $F$. By the 
relative
variational principle, this implies 
$\nu (F)=0$, and hence that the support of $\nu$ is contained in $J$, as desired.
Observe that we can assume, with no loss of generality, that we have $F\subset K$.

\medskip

Let $W$ be an open neighbourhood of $F$ with $\overline{W}\cap J=\emptyset$. 
 Using Gromov's contruction \cite{G03}  and 
the same arguments as in 
the proof of \cite[Theorem 1.108]{DS10} (see also the proof of \cite[Lemma A.2.6]{B16}),
we have
$$h_t(f, F)=h_t(f,F\cap K)\le \mathrm{lov} (f,W):=\limsup_{{ N}\to\infty}\frac{1}{N}\log\mathrm{vol}(\Gamma_N^W),$$
where, for every $N\in \mathbb N$,
$\Gamma_N^W$ denotes the subset of ${U}^{N+1}$ given by
$$ \Gamma_N^W:=\{(z,f(z),
\dots,
f^{N-1}(z)),z\in W\cap f^{-N}(U)\}.$$
Note that, 
for every $N\in \mathbb N$, we have
$$\mathrm{vol}(\Gamma_N^W)={\sum_{\substack{0 \le n_i \le N-1, \\ 1 \le i \le k}}} \int_{W\cap f^{-N}(U)} 
(f^{n_1})^*\omega\wedge\dots\wedge (f^{n_k})^*\omega.
$$

As the number of terms in the sum is polynomial in $N$,
it is enough to consider separately
each term of the
sum on the right-hand side of the above expression. Hence, without loss
of generality, we can assume that we have
$0\leq n_1\le \dots \leq n_k\leq N-1$
and we need to show the inequality
\[
\limsup_{N\to \infty}
\frac{1}{N}
\log \int_{W\cap f^{-N}(U)} (f^{n_1})^*\omega\wedge...\wedge (f^{n_k})^*\omega
\le \log {\lambda}.
\]

Recall the notation
\eqref{e:notation-Omega}. 
We will now
consider the smooth form
\[
\Omega'_{n_1, \dots, n_k}
:=
\Omega_{0,n_2-n_1, \dots n_k-n_1}
=
\omega\wedge
 (f^{n_2-n_1})^*\omega \wedge
\ldots
\wedge (f^{n_k-n_1})^*\omega.
\]
Observe that we have
\[\Omega_{n_1, \dots, n_k}= (f^{n_1})^* (\Omega'_{n_1, \dots, n_k}).\]
Fix also
an
open set $\tilde W$ with $\tilde W \Supset W$ and $\tilde W \cap J=\emptyset$
and a
smooth function $0\le\psi\le 1$ 
with compact support in $\tilde W$ and 
such that $\psi|_W=1$.
Since $\mu|_{\tilde{W}}=0$,
by Corollary
\ref{t:speed}
applied with 
\[U' = f^{- (N-n_1)} (U)
\quad
\mbox{ and
} \quad
\omega_0=
\|\Omega'_{n_1, \dots, n_k}\|_{f^{-(N-n_1)}(U)}^{-1}
\cdot
(\Omega'_{n_1, \dots, n_k})_{|{f^{-(N-n_1)}(U)}}
\]
 there exists  a psh function $u_\lambda$ such that 
\begin{align*}
       \int_{ f^{-N}(U)}
       \psi
       \Omega_{n_1, \dots, n_k}
              =
              &\left|\left\langle (f^{n_1})^*
       ({\Omega'_{n_1,\dots,n_k}})
       -d_t^{n_1}
\|\Omega'_{n_1, \dots, n_k}\|
       _{f^{-(N-n_1)}(U)}
              \mu,\psi \right\rangle\right| \\
       \le& A\|\psi\|_{ \mathcal C^2 (U)}
        \lambda^{n_1}
       \int_{ f^{-(N-n_1)}(U)} {|u_\lambda|}
       \Omega'_{n_1, \dots, n_k},
\end{align*}
where
$A>0$ is a constant independent of $n_1, \dots, n_k,N$, 
and $\psi$.
We deduce from the above inequality
and
Lemma \ref{l:limitmeasure} 
(applied
 with $m_j=n_j-n_1$
 for all $j$,
so that $\Omega_{m_1, \dots, m_k}= \Omega'_{n_1,\dots, n_k}$)
that
there exists
a positive
constant $A_1$ (depending on {$\lambda$}, but independent of $n_1, \dots, n_k$, $N$, and $\psi$)
such that
 \begin{equation*}
     \int_{ f^{-N}(U)}\psi
         \Omega_{n_1, \dots, n_k}
     \le  
   A
         \|\psi\|_{ \mathcal C^2 (U)}
     {\lambda}^{n_1}
\Big(   A_1 +
  {N^2}
    \int_{f^{-N+n_1}(U)}
    \Omega'_{n_1,\dots, n_k}\Big).
    \end{equation*}
Recalling the definitions of 
$\Omega_{n_1, \dots, n_k}$
and $\psi$, we deduce from 
 the above expression
that we have
 \begin{equation}\label{e:omega-steps}
      \int_{W\cap f^{-N}(U)} (f^{n_1})^*\omega\wedge\dots
        \wedge (f^{n_k})^*\omega
      \le 
       A \|\psi\|_{\mathcal C^2 (U)}
      {\lambda}^{n_1}
       \Big( A_1
      +
            { N^2}
      \int_{ f^{-N+n_1}(U)}
 \Omega'_{n_1,\dots, n_k}\Big).
  \end{equation}
 By Lemma \ref{l:intlessdegree}
 (applied with $m'_j =n_j-n_1$)
 we have 
 \begin{equation}\label{eq:intled}
     \int_{ f^{-N+n_1}(U)} 
    \Omega'_{n_1, \dots, n_k}
    =
    \int_{f^{-N+n_1} (U)}
\Omega_{0, n_2-n_1, \dots, n_k-n_1}    
    \le \eta(N)
        (d_{k-1})^{N-n_1},
 \end{equation}
 where the function $\eta$
 satisfies
$\lim_{n\to\infty} \eta(n)^{1/n}=1$. Combining \eqref{e:omega-steps} and \eqref{eq:intled}, we obtain
\[\begin{aligned}
 \int_{W\cap f^{-N}(U)} (f^{n_1})^*\omega\wedge\dots\wedge (f^{n_k})^*\omega
 & \le
 A\|\psi\|_{\mathcal{C}^2(U)}
 {\lambda}^{n_1} 
 \big(
 A_1 +  
 { N^2}
 \eta (N)(d_{k-1})^{N-n_1}\big)\\
 & \le  \tilde{\eta}(N)
  {\lambda}^{n_1}
 (d_{k-1})^{N-n_1}\\
 & \leq 
 \tilde{\eta}(N)
  {\lambda}^{N},
  \end{aligned}\]
where the function $\tilde \eta$ 
(which can depend on $\lambda$)
satisfies 
$\lim_{n\to\infty} \tilde{\eta}(n)^{1/n}=1$
and in the last step we used the inequality
$d_{k-1} \le \lambda$.
Consequently, we have
$$\limsup_{N\to\infty}\frac{1}{N}\log\mathrm{vol}(\Gamma_N^W)\le
\limsup_{N\to \infty}
\frac{1}{N}\log \left(
N^{k} \tilde{\eta}(N)
 {\lambda}^N
\right)
\le
\log {\lambda},$$
which gives
$ h_t(f,F)\le \log {\lambda}$, as desired.
This concludes the proof.
\end{proof}

\section{Further results and remarks}\label{s:5}

\subsection{Hausdorff dimension of the Julia set} 
In this section we fix a polynomial-like map $f : U\to V$ 
 with dominant topological degree. Let $\nu$ be an  ergodic 
 probability
measure with 
$h_\nu(f)>\log {\beta}$
 (where $\beta$ is as in \eqref{eq:beta})
and denote by
$$0< L_k(\nu)\le L_{k-1}(\nu)\le\ldots\le L_1(\nu)$$
its
Lyapunov exponents, counting multiplicities. The inequality $0<L_k(\nu)$ is proved in  \cite[Theorem 4.1]{BR22} (see \cite{dTh08,D12}
for the case of endomorphisms).
We also have the following property, 
originally proved by Dupont for endomorphisms  \cite[Theorem A]{D11}.
As the proof is local, it also applies in our setting.

\begin{theorem}\label{t:infdelta}
   Let $f$, $\nu$,
   and $L_j, 1\le j\le k$ be as above. Then for $\nu$-almost all $z\in V$ we have
$$\liminf_{r\to0}\frac{\log\nu(B(z,r))}{\log r}\ge  \frac{\log {d_{k-1}}}{L_1(\nu)}+\frac{h_{\nu}(f)-\log {d_{k-1}}}{L_k(\nu)},$$
 where $B(z,r)$ is the ball
 {of} 
 radius $r>0$ and centred at $z$.   
In particular, 
for every 
Borel set $E\subset V$
with $\nu(E)>0$,
the Hausdorff dimension 
 $\mathrm{dim}_{H}E$ 
of $E$ satisfies
$$  \mathrm{dim}_{H}E
\ge  \frac{\log {d_{k-1}}}{L_1(\nu)}+\frac{h_{\nu}(f)-\log {d_{k-1}}}{L_k(\nu)}. $$
  \end{theorem}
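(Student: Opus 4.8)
The plan is to deduce the Hausdorff dimension statement from the pointwise one, and to obtain the pointwise bound by transporting Dupont's argument \cite{D11} to our setting, checking that all its ingredients are local. For the reduction, write $\underline d_\nu(z):=\liminf_{r\to 0}\frac{\log\nu(B(z,r))}{\log r}$ and $s:=\frac{\log d_{k-1}}{L_1(\nu)}+\frac{h_\nu(f)-\log d_{k-1}}{L_k(\nu)}$; note that $h_\nu(f)>\log\beta\ge\log d_{k-1}$ by \eqref{eq:beta}, so the second summand is positive. If $\underline d_\nu\ge s$ holds $\nu$-a.e., then for each $\eta>0$ the set $\{\underline d_\nu\ge s-\eta\}$ is, up to a $\nu$-null set, an increasing union of Borel sets on each of which $\nu(B(z,r))\le r^{s-2\eta}$ for all $z$ in the set and all $r$ below a uniform threshold; the mass distribution principle applied on each of these gives $\dim_H E\ge s-2\eta$ whenever $\nu(E)>0$, and $\eta\to 0$ yields the claim. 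So it suffices to prove $\underline d_\nu(z)\ge s$ for $\nu$-a.e. $z$.

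First I would pass to the natural extension $\hat f\colon\hat K\to\hat K$, whose points are the full orbits $(z_n)_{n\in\mathbb Z}$ in $K$, with the unique $\hat f$-invariant lift $\hat\nu$ of $\nu$. Since $L_k(\nu)>0$ by \cite[Theorem 4.1]{BR22}, \emph{all} Lyapunov exponents of the derivative cocycle $Df$ over $(\hat f,\hat\nu)$ are positive, so Oseledets' theorem provides, along $\hat\nu$-a.e. orbit, a measurable flag adapted to the exponents together with Pesin/Lyapunov charts in which the inverse branch $f^{-n}_{\hat z}$ of $f^n$ selected by the history of $\hat z$ is defined on a ball of a definite radius and contracts at the rates prescribed by $L_1\ge\cdots\ge L_k$; equivalently, locally around $z_0$ the Bowen ball $B_n(z_0,\epsilon)=\bigcap_{i=0}^{n}f^{-i}B(f^iz_0,\epsilon)$ is comparable to an ellipsoid whose semi-axes are, up to subexponential factors, $\epsilon e^{-nL_1},\dots,\epsilon e^{-nL_k}$. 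In this framework Dupont's Ledrappier--Young-type inequality applies: there are partial entropies $\gamma_1,\dots,\gamma_k\ge 0$ attached to the successive steps of the flag, with $\gamma_1+\cdots+\gamma_k=h_\nu(f)$, such that $\underline d_\nu(z)\ge\sum_{i=1}^{k}\gamma_i/L_i$ for $\nu$-a.e. $z$ (proved by combining the Brin--Katok formula, the ellipsoidal description of Bowen balls, and a density-point argument). Every object here --- natural extension, Oseledets, Pesin charts, comparison of dynamical and round balls --- uses only $(K,f)$ as a smooth dynamical system with an ergodic measure, so this part of \cite{D11} transfers verbatim.

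The single input that brings in the dynamical degree is the bound on the partial entropy carried by the $k-1$ strongest directions: $\gamma_1+\cdots+\gamma_{k-1}\le\log d_{k-1}$, equivalently, the number of $(n,\epsilon)$-separated orbit pieces projecting to distinct points of a $(k-1)$-dimensional transversal grows at most like $d_{k-1}^n$. This is precisely the kind of estimate on the volume growth of $(k-1)$-dimensional analytic sets under $f^n$ that is recalled and used in Section \ref{s:4}, see Lemma \ref{l:intlessdegree} and \cite{BDR23}, hence available for polynomial-like maps with dominant topological degree exactly as on $\mathbb P^k$ (cf. \cite{dTh08}). Granting it, a short convexity step finishes: since $L_i\le L_1$ for $i\le k-1$, one has $\sum_{i=1}^{k}\gamma_i/L_i\ge S/L_1+(h_\nu(f)-S)/L_k$ with $S:=\sum_{i=1}^{k-1}\gamma_i\in[0,\log d_{k-1}]$, and the right-hand side is affine in $S$ with non-positive slope $\frac{1}{L_1}-\frac{1}{L_k}$, hence minimized at $S=\log d_{k-1}$, where it equals $s$.

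The main obstacle is not this last step but verifying the two facts I have leaned on directly in the polynomial-like category: Dupont's pointwise Ledrappier--Young lower bound for non-invertible holomorphic maps (the Pesin-theoretic bookkeeping along the natural extension and the comparison between Bowen ellipsoids and genuine balls), and the transversal-entropy bound $\gamma_1+\cdots+\gamma_{k-1}\le\log d_{k-1}$, where the algebraic intersection estimates used on $\mathbb P^k$ must be replaced by the pluripotential-theoretic volume estimates of Section \ref{s:4} together with the monotonicity of the dynamical degrees from \cite{BDR23}. Both are either local or reduce to the volume growth of analytic subsets, which is exactly why Dupont's proof carries over, as asserted in the remark preceding the statement.
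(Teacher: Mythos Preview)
Your proposal is correct and aligns with the paper's own treatment: the paper does not give a proof but simply states that Dupont's argument in \cite{D11} is local and therefore applies verbatim to polynomial-like maps with dominant topological degree. Your outline goes further by sketching the actual mechanism of Dupont's proof (natural extension, Oseledets/Pesin charts, Brin--Katok, the partial-entropy bound coming from the growth of $(k-1)$-dimensional volumes), but the underlying route is the same.
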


As in \cite{D11}, the following consequence
of Theorem \ref{t:infdelta} gives a lower bound 
for
the Hausdorff dimension of the Julia set $J$ of $f$.

\begin{corollary}
  Let $f$ be as above
   and
  $\beta$ be as in \eqref{eq:beta}.
Then, for every ergodic
   measure $\nu$
   whose measure-theoretic entropy
  satisfies 
  $h_\nu(f)>\log {\beta}$,  we have
  $$ \mathrm{dim}_{\mathcal{H}}J\ge \frac{\log { d_{k-1}}}{L_1(\nu)}+\frac{h_{\nu}(f)-\log { d_{k-1}}}{L_k(\nu)}.$$
\end{corollary}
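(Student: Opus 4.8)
The plan is to read the statement off directly from the two results already established, namely the support theorem (in its sharp form) and the pointwise lower bound for the local dimension of $\nu$. First I would observe that the measure $\nu$ appearing here satisfies \emph{exactly} the hypotheses of Theorem~\ref{t:logp-nomasspluripolar}: it is ergodic and its measure-theoretic entropy obeys $h_\nu(f)>\log\beta$ with $\beta$ as in \eqref{eq:beta}. Hence $\Supp\nu\subseteq J$; in particular $\nu(J)=1$.

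Next I would invoke Theorem~\ref{t:infdelta}, whose hypotheses are again met by this same $\nu$ (ergodicity together with $h_\nu(f)>\log\beta$). Applying its second assertion to the Borel set $E=J$, which satisfies $\nu(E)=1>0$ by the previous step, gives immediately
\[
\mathrm{dim}_H J \ \ge\ \frac{\log d_{k-1}}{L_1(\nu)}+\frac{h_\nu(f)-\log d_{k-1}}{L_k(\nu)},
\]
which is precisely the claimed inequality. Equivalently, one may first use monotonicity of the Hausdorff dimension, $\mathrm{dim}_H J\ge \mathrm{dim}_H \Supp\nu$, and then apply Theorem~\ref{t:infdelta} to $E=\Supp\nu$; the outcome is the same.

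There is essentially no genuine obstacle here: the whole content is carried by Theorems~\ref{t:logp-nomasspluripolar} and~\ref{t:infdelta}, and the corollary is just the remark that once $\nu$ is known to be supported on $J$, its local dimension estimate propagates to a global lower bound for $\mathrm{dim}_H J$. The only point deserving a moment's attention is checking that the two cited theorems genuinely apply to the \emph{same} measure under the \emph{single} hypothesis $h_\nu(f)>\log\beta$ — which they do, since both are stated under precisely that condition — so that the threshold $\log\beta$ (rather than the a priori weaker $\log\sqrt{d_{k-1}d_t}$ of Lemma~\ref{l:bound-beta}) is the one that enters the final bound.
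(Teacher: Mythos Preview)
Your proposal is correct and follows essentially the same approach as the paper's own proof: apply Theorem~\ref{t:logp-nomasspluripolar} to obtain $\nu(J)=1$, then invoke Theorem~\ref{t:infdelta} with $E=J$. The paper's argument is just a terser version of what you wrote.
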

\begin{proof}
Theorem
 \ref{t:logp-nomasspluripolar}
implies that $\nu$ is supported on  $J$, and hence 
$\nu(J)=1$. Therefore, the assertion follows from 
Theorem
\ref{t:infdelta}. 
\end{proof}

\subsection{Strong stability in families of polynomial-like maps}

We consider in this section a holomorphic family of polynomial-like maps $(f_\tau)_{\tau \in M}$
 with dominant topological degree
parametrized by a complex manifold $M$,
see for instance \cite[Section 3.8]{DS03} and \cite[Section 2.5]{DS10}.
Recall that all the 
$f_\tau$'s
have the same topological degree $d_t$, which we assume to be at least 2,
 and that the map $\tau \mapsto d_{k-1}(f_\tau)$ is upper semicontinuous. 
We will denote by 
$\mu_\tau$
the equilibrium measure of $f_\tau$.
The dynamical stability for such families has been studied in \cite{B19}, as a generalization of the theory developed for families of endomorphisms of $\mathbb P^k$ in \cite{BBD18}, see also \cite{Pha05} and \cite{dM03, Ly83a, MSS83} for the case $k=1$.
Following \cite{BBD18,B19}, let us denote by $\mathcal{J}$  the set of all holomorphic maps $\gamma : M \to \mathbb C^k$ such that $\gamma(\tau)$ belongs to the Julia set of $f_\tau$ for all $\tau \in M$. Define $\mathcal{F}:\mathcal{J}\to \mathcal{J}$ as $\mathcal F \gamma (\tau) := f_\tau (\gamma(\tau))$.

\begin{definition}\label{def:lamination}
A \emph{dynamical lamination}
for the family
$(f_\tau)_{\tau \in M}$
is an $\mathcal{F}$-invariant subset $\mathcal{L}$
of $\mathcal{J}$ such that
\begin{enumerate}
    \item $\Gamma_\gamma\cap \Gamma_{\gamma'}=\emptyset$ for every
    $\gamma\ne \gamma'\in\mathcal{L}$, where $\Gamma_\gamma$ is the graph of $\gamma$
    in $M\times \C^k$;
    \item $\Gamma_\gamma\cap GO(C_f)=\emptyset$
for every    
$\gamma\in\mathcal{L}$, where $C_f$ is the critical set of
the map $f: (\tau,z)\mapsto (\tau, f_\tau (z))$,
and $GO (f) := \cup_{n,m\geq 0} f^{-m} ( f^{n} (C_f))$;
    \item $\mathcal{F}:\mathcal{L}\to \mathcal{L}$ is $d_t$-to-$1$.
\end{enumerate}
\end{definition}

The dynamical stability of the family $(f_\tau)_{\tau\in M}$ is defined and characterized in \cite{BBD18,B19} by a number of equivalent conditions,
among which there is 
the existence of 
a dynamical lamination $\mathcal{L}$ such that 
 $\mu_\tau(\{\gamma(\tau):\gamma\in\mathcal{L}\})=1$
    for all $\tau\in M$.
It was proved in \cite{BR22} that stability implies (and is then equivalent to) the existence of a dynamical lamination associated to any ergodic
measure 
 for some
$f_{\tau_0}$
whose entropy 
in larger than $\log d_{k-1}  (f_{\tau_0})$ 
and
supported on the Julia set.
The following
is another
corollary of
 our main results,
which permits to 
remove
the  assumption on the support of the measure in \cite[Section 4.3]{BR22}
when the measures
satisfy the 
stronger bound on their measure-theoretic entropy as in
Theorem
 \ref{t:logp-nomasspluripolar}.

\begin{corollary}\label{c:stability}
    Let $M$ be a connected and simply connected
 complex manifold and 
 $(f_\tau)_{\tau\in M}$
 a 
 stable
family of polynomial-like maps  with dominant
topological
degree. Fix $\tau_0\in M$
and let $\beta(f_{\tau_0})$ be as in \eqref{eq:beta}. 
Then, there exists a dynamical lamination  $\mathcal{L}$ such that 
 $\nu(\{\gamma(\tau_0):\gamma\in\mathcal{L}\})=1$
    for every
    ergodic
    $f_{\tau_0}$-invariant  probability measure
    $\nu$ with $h_{\nu}(f_{\tau_0})>\log  \beta (f_{\tau_0})$. 
\end{corollary}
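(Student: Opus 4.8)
The plan is to reduce Corollary~\ref{c:stability} to the existing machinery of \cite{BR22} by using Theorem~\ref{t:logp-nomasspluripolar} exactly once, at the very beginning, to get rid of the hypothesis on the support of the measure. So let $(f_\tau)_{\tau\in M}$ be a stable family of polynomial-like maps with dominant topological degree, $M$ connected and simply connected, and fix $\tau_0\in M$. Let $\nu$ be an ergodic $f_{\tau_0}$-invariant probability measure with $h_\nu(f_{\tau_0})>\log\beta(f_{\tau_0})$. Since $\beta(f_{\tau_0})\ge d_{k-1}(f_{\tau_0})$, we in particular have $h_\nu(f_{\tau_0})>\log d_{k-1}(f_{\tau_0})$, so $\nu$ is one of the measures to which the results of \cite{BR22} apply as soon as we know it is supported on the Julia set of $f_{\tau_0}$. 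But this is precisely what Theorem~\ref{t:logp-nomasspluripolar} provides: since $h_\nu(f_{\tau_0})>\log\beta(f_{\tau_0})$, the measure $\nu$ is supported on $J_{\tau_0}=\operatorname{Supp}\mu_{\tau_0}$.

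Next I would invoke the stability of the family together with \cite[Section~4.3]{BR22}: for a stable family over a simply connected base, any ergodic $f_{\tau_0}$-invariant measure with entropy strictly larger than $\log d_{k-1}(f_{\tau_0})$ \emph{and} supported on the Julia set admits an associated dynamical lamination, i.e.\ there is a dynamical lamination $\mathcal{L}$ in the sense of Definition~\ref{def:lamination} with $\nu(\{\gamma(\tau_0):\gamma\in\mathcal L\})=1$. Concretely, one uses that stability yields an equilibrium lamination for $\mu_\tau$ over all of $M$, that $\nu$ sits inside the Julia set where the holomorphic motion of \cite{B19} is defined, and that one can pull back $\nu$ through the motion to obtain holomorphic graphs $\gamma$ avoiding the grand orbit of the critical set and carrying full $\nu$-measure; simple connectedness of $M$ is what guarantees these graphs are globally well-defined single-valued maps $\gamma:M\to\mathbb C^k$. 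Putting the two steps together gives exactly the statement: the dynamical lamination $\mathcal L$ furnished by \cite{BR22} satisfies $\nu(\{\gamma(\tau_0):\gamma\in\mathcal L\})=1$.

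The one point that requires a little care is the logical direction of the implications in \cite{BR22}: that paper characterizes stability through the existence of a dynamical lamination with full $\mu_\tau$-measure, and separately shows that stability implies the existence of a lamination associated to any \emph{Julia-supported} ergodic measure of large entropy. So the argument is genuinely a two-ingredient one: Theorem~\ref{t:logp-nomasspluripolar} removes the support hypothesis, and \cite[Section~4.3]{BR22} does the rest. I do not expect any obstacle beyond checking that the entropy threshold $\log\beta(f_{\tau_0})$, which is at least $\log d_{k-1}(f_{\tau_0})$ by definition \eqref{eq:beta}, is compatible with the threshold $\log d_{k-1}(f_{\tau_0})$ used in \cite{BR22} --- which it is, since $\beta\ge d_{k-1}$ always. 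One should also note that the upper semicontinuity of $\tau\mapsto d_{k-1}(f_\tau)$ and the constancy of $d_t$ along the family, recalled above, are exactly what make $\beta(f_{\tau_0})$ and the relevant entropy estimates behave well; but these are used only implicitly through the cited results.

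\begin{proof}
Fix $\tau_0\in M$ and an ergodic $f_{\tau_0}$-invariant probability measure $\nu$ with $h_\nu(f_{\tau_0})>\log\beta(f_{\tau_0})$. By Theorem~\ref{t:logp-nomasspluripolar} applied to $f_{\tau_0}$, the measure $\nu$ is supported on the Julia set $J_{\tau_0}=\operatorname{Supp}\mu_{\tau_0}$. On the other hand, since by the definition \eqref{eq:beta} of $\beta$ we have $\beta(f_{\tau_0})\ge d_{k-1}(f_{\tau_0})$, the assumption on $\nu$ gives in particular $h_\nu(f_{\tau_0})>\log d_{k-1}(f_{\tau_0})$. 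Hence $\nu$ is an ergodic $f_{\tau_0}$-invariant probability measure supported on the Julia set of $f_{\tau_0}$ whose measure-theoretic entropy is strictly larger than $\log d_{k-1}(f_{\tau_0})$.

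Since $M$ is connected and simply connected and the family $(f_\tau)_{\tau\in M}$ is stable, \cite[Section~4.3]{BR22} applies to such a measure and produces a dynamical lamination $\mathcal L$ in the sense of Definition~\ref{def:lamination} associated to $\nu$, i.e.\ an $\mathcal F$-invariant subset $\mathcal L\subset\mathcal J$ satisfying conditions (1)--(3) of Definition~\ref{def:lamination} and such that
\[
\nu(\{\gamma(\tau_0):\gamma\in\mathcal L\})=1 .
\]
This is exactly the asserted conclusion.
\end{proof}
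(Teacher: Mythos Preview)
Your proposal is correct and follows essentially the same approach as the paper: apply Theorem~\ref{t:logp-nomasspluripolar} to force $\nu$ onto the Julia set $J_{\tau_0}$, then invoke \cite{BR22} (the paper cites the specific result \cite[Corollary~4.5]{BR22}, which is the statement in Section~4.3 you describe). One small point of care: the corollary asserts $\exists\,\mathcal L\ \forall\,\nu$, whereas in your formal \texttt{proof} environment you fix $\nu$ first and then produce $\mathcal L$, which a priori gives only $\forall\,\nu\ \exists\,\mathcal L$; the cited result in \cite{BR22} in fact furnishes a single lamination working for all such measures, so the issue is purely one of phrasing.
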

\begin{proof}
 By Theorem
 \ref{t:logp-nomasspluripolar},
every
ergodic 
$f_{\tau_0}$-invariant probability measure $\nu$ with $h_{\nu}(f_{\tau_0})>\log { \beta}(f_{\tau_0})$ is supported in the Julia set $J_{\tau_0}$ of $f_{\tau_0}$. Thus, the conclusion follows from \cite[Corollary 4.5]{BR22}.
\end{proof}

\end{document}